\documentclass[a4paper]{article}
\usepackage[utf8]{inputenc}
\usepackage{geometry}
\usepackage{amsmath,amssymb,amsfonts}
\usepackage{colortbl}
\usepackage{pgf}
\usepackage{epsfig}
\usepackage{textcomp}
\usepackage{url}
\usepackage{esvect}
\usepackage{amsthm,amsmath,amsfonts}
\usepackage[nottoc, notlof, notlot]{tocbibind}
\newtheorem{theorem}{Theorem}[section]
\newtheorem{proposition}{Proposition}[section]
\newtheorem{corollary}{Corollary}[section]
\theoremstyle{remark}
\newtheorem{remark}{Remark}[section]
\title{Study of a family of higher order nonlocal degenerate parabolic equations: from the porous medium equation to the thin film equation}
\author{Rana Tarhini \footnote{Université Paris Est, Laboratoire d'Analyse et de Mathématiques Appliquées UMR 8050, 61 avenue du Général de Gaulle, 94010 Créteil, France.}}

\begin{document}
\maketitle

\begin{abstract}
In this paper, we study a nonlocal degenerate parabolic equation of order $\alpha +2$ for $\alpha \in (0,2)$. The equation is a generalization of the one arising in the modeling of hydraulic fractures studied by Imbert and Mellet in 2011. Using the same approach, we prove the existence of solutions for this equation for $0<\alpha<2$ and for nonnegative initial data satisfying appropriate assumptions. The main difference is the compactness results due to different Sobolev embeddings. Furthermore, for $\alpha>1$, we construct a nonnegative solution for nonnegative initial data under weaker assumptions.
\end{abstract}

\section{Introduction}

In this paper, we study the following problem
\begin{eqnarray}\label{eq:probl}
\begin{cases}
 \partial_t u + \partial_x (u^n \partial_x I(u)) = 0  & \quad \quad  \text{for } x \in \Omega ,\quad t > 0,  \\ 
 \partial_x u = 0 , u^n \partial_x I(u)= 0 & \quad \quad  \text{for } x \in \partial\Omega ,\quad t > 0, \\
u(0,x) = u_0(x) & \quad \quad  \text{for } x \in \Omega,
\end{cases}
\end{eqnarray}
where $\Omega = (a,b)$ is a bounded interval in $ \mathbb{R} $, $ n $ is a positive real number and $ I $ is a nonlocal elliptic negative operator of order $ \alpha $ defined as the $\alpha / 2$ power of the Laplace operator with Neumann boundary conditions $I = -(-\Delta)^\frac{\alpha}{2}$  where $\alpha \in (0,2)$; this operator will be defined below by using the spectral decomposition of the Laplacian.

The case $\alpha = 1$ was studied by Imbert and Mellet \cite{Imbert-mellet} who proved the existence of nonnegative solutions for nonnegative initial data with appropriate conditions. In this case, when $ n = 3 $ the equation designs the physical KGD model developed by Geertsma and de Klerk \cite{geertsma} and Khristianovich and Zheltov \cite{khristianovich}. It represents the influence of the pressure exerted by a viscous fluid on a fracture in an elastic medium subject only to plane strain. This equation is derived from the conservation of mass for the fluid inside the fracture, the Poiseuille law and an appropriate pressure law (see \cite[section 3]{Imbert-mellet} and \cite{IM2} for further details). In \cite{Imbert-mellet}, weak solutions are constructed by passing to the limit in a regularized problem. The necessary compactness estimates are obtained from appropriate energy estimates.

\noindent The equation under consideration
\begin{eqnarray} \label{equation}
u_t + \partial_x (u^n \partial_x I(u)) = 0
\end{eqnarray}
is a nonlocal degenerate parabolic equation of order $\alpha + 2$.

When $\alpha=2$, this equation coincides with the thin film equation (TFE for short) 
\begin{eqnarray}
u_t + \partial_x (u^n \partial_{xxx}^3 u) = 0.
\end{eqnarray} 
This is a fourth order nonlinear degenerate parabolic equation originally studied by Bernis and Friedman \cite{bernis}. This equation arises in many applications like spreading of a liquid film over a solid surface ($n=3$) and Hele-Shaw flows ($n=1$) (see \cite{thin1, thin2, thin3, thin4, thin5, thin6, thin7}). TFE is derived also from a conservation of mass, the Poiseuille law (derived from a lubrication approximation of the Navier-Stokes equations for thin film viscous flows) and various pressure laws. The parameter $n \in (0,3]$ models various boundary conditions at the liquid-solid interface. The case $n>3$ is mainly of mathematical interest \cite{comparaison}. In \cite{bernis} weak solutions $u$ are exhibited in a bounded interval under appropriate boundary conditions. In addition, they proved that $u$ is nonnegative if $u_0$ is also so, and that the support of the solution $u(t,.)$ increases with $t$ if $u_0$ is nonnegative and $n \geq 4$.

For $\alpha=0$, the porous medium equation (PME for short) is recovered
\begin{eqnarray}
u_t - \partial_x (u^n \partial_x u) = 0.
\end{eqnarray}
This is a nonlinear degenerate parabolic equation. The simple PME model describes the modeling of the motion of a gas flow through a porous medium \cite{PME}. In this case, the PME is derived from mass balance, Darcy's law which describes the dynamics of flows through porous media, and a state equation for the pressure \cite{PME}. PME also arises in heat transfer \cite{heattransfer} and groundwater flow \cite{groundwater} and was originally proposed by Boussinesq. It took many years to prove that PME is well posed and the famous source type solutions were found by Zel'dovich, Kompanyeets and Barenblatt \cite{PME}. The questions of existence, uniqueness, stability, smoothness of solutions together with dynamical properties and asymptotic behavior are well represented in \cite{PME} where two main problems are studied. First, the domain space is $\mathbb{R}^d$ and the initial condition $u_0$ has a compact support so the solution $u(t,x)$ vanishes for all positive times $t > 0$ outside a compact set that changes with time. Secondly, if the initial data has a hole in the support then the solution has a possibly smaller hole for $t > 0$.

Note that TFE can be seen as a fourth order version of the classical PME \cite{comparaison}. Furthermore, both equations are parabolic in divergence form. In both cases, there are compactly supported source type solutions ($n>1$ for PME \cite{PME} and $0<n<3$ for TFE \cite{sourcetypeforTFE}) \cite{similarities}. The most famous common properties are finite speed of propagation and the waiting time phenomenon.  Similar properties are expected in our case. Self-similar solutions are constructed in \cite{IM2} but other properties are still not proved. One striking difference between TFE and PME is the lack of a maximum principle for TFE \cite{similarities}.

The case $\alpha \in (-2,0)$ corresponds to the fractional porous medium equation studied  in \cite{FPME}. Explicit self-similar solutions are exhibited and, under appropriate conditions, weak solutions are constructed.

\bigskip

In this paper, we will generalize the result of \cite{Imbert-mellet} to the cases $ 0 < \alpha \leqslant 1 $ and $ 1 < \alpha < 2 $. We prove a result of existence with the same approach as that in the case $\alpha = 1$ but by modifying the compactness results. Consequently all cases $\alpha \in [0,2]$ are now covered.

In the case $ \alpha > 1 $ we get the local uniform convergence of approximate solutions due to the following embedding in dimension $1$
\begin{eqnarray*}
H^\frac{\alpha}{2}(\Omega) \hookrightarrow C^{0,\frac{\alpha-1}{2}}(\Omega).
\end{eqnarray*}
This convergence allows one to pass to the limit in the nonlinear term and then allows us to construct nonnegative solutions for nonnegative initial data merely in $H^\frac{\alpha}{2}(\Omega)$.

In the case $\alpha < 1$ because of the following embedding
\begin{eqnarray*}
H^\frac{\alpha}{2}(\Omega) \hookrightarrow L^p(\Omega) \text{ for all } p < \frac{2}{1-\alpha},
\end{eqnarray*}
we can get a compactness result in $L^p(\Omega)$ only for $p < \frac{2}{1-\alpha}$ and not for all $p < \infty$ as in the case $\alpha = 1$. Neverthless, we recover a compactness result for the term $I(u)$ which allows us to pass to the limit and conclude.

In both cases, we prove that the solution is strictly positive under a condition on $n$.

\subsection*{Integral inequalities}

Assume that $\Omega = \mathbb{R}$, if $u$ is a solution of \eqref{equation} then it satisfies the energy inequality
\begin{eqnarray*}
- \int_\Omega u(t) I(u(t)) dx + 2 \int_0^T \int_\Omega u^n \partial_x I(u)^2 dx dt \leqslant - \int_\Omega u_0 I(u_0) dx.
\end{eqnarray*}
Observe that $-\int u I(u)$ is the homogeneous $H^\frac{\alpha}{2}$ norm. 
Let $G$ be a nonnegative function such that $G''(s) = \frac{1}{s^n}$. Then the positive solution satisfies
\begin{eqnarray*}
\int_\Omega G(u(t)) dx - \int_0^T \int_\Omega \partial_x u \partial_x I(u) dx dt \leqslant \int_\Omega G(u_0) dx.
\end{eqnarray*}
Note that $-\int \partial_x u  \partial_x I(u)$ is the homogeneous $H_N^{\frac{\alpha}{2}+1}$ norm (it is in fact a Neumann-Sobolev space, see below). We see that the energy inequality controls the $L^\infty(0,T;H^\frac{\alpha}{2}(\Omega))$ norm of the solution. For the function $ G $ mentioned above, we can take
 \begin{eqnarray} \label{G}
 G(s) = \int_1^s \int_1^r \frac{1}{t^n} dt dr
 \end{eqnarray}
 so that $ G $ is a nonnegative convex function satisfying $G(1) = G'(1) = 0$, $ G(s) = \infty $ for all $ s < 0 $ and for $s > 0$, we have
 \begin{eqnarray*}
 G(s) = \begin{cases} s \ln s - s + 1 \quad & \text{ when } n = 1\\
 - \frac{s^{2-n}}{(2-n)(n-1)} + \frac{s}{n-1} + \frac{1}{2-n} \quad & \text{ when } 1 < n < 2 \\
 \ln \frac{1}{s} + s - 1 \quad & \text{ when } n = 2 \\
 \frac{1}{(n-2)(n-1)}\frac{1}{s^{n-2}} + \frac{s}{n-1} - \frac{1}{n-2} \quad & \text{ when } n > 2. 
 \end{cases}
 \end{eqnarray*}

\subsection*{Main results}
In this work, we prove three main results. We first prove the existence of nonnegative weak solutions for the problem with $ 0 < \alpha \leq 1$ for nonnegative initial data with apropriate conditions. Secondly, for $\alpha>1$, we construct nonnegative solutions for nonnegative initial data in $H^\frac{\alpha}{2}(\Omega)$. Finally, we prove the strict positivity of solutions for large $n's$.

\begin{theorem}[Existence of solutions for $0< \alpha \leqslant 1$] \label{th:th1}

Let $ n \geqslant 1 $ and $\alpha \in (0,1]$.
For any nonnegative initial condition $ u_0 \in H^{\frac{\alpha}{2}}(\Omega) $ such that 
\begin{eqnarray} \label{entropy condition}
\int_\Omega G(u_0) dx < \infty
\end{eqnarray}
where G is a nonnegative function such that $ G''(s)= \frac{1}{s^n} $, there exists a nonnegative function
\begin{eqnarray*}
u \in L^\infty(0,T ; H^{\frac{\alpha}{2}}(\Omega)) \cap L^2(0,T ; H_N^{\frac{\alpha}{2} + 1}(\Omega))
\end{eqnarray*} 
which satisfies on $Q=(0,T)\times \Omega$
\begin{eqnarray}\label{eq:formu}
\iint_Q u \partial_t \varphi dt dx - \iint_Q n u^{n-1} \partial_x u I(u) \partial_x \varphi dx dt - \iint_Q u^n I(u) \partial^2_{xx} \varphi dx dt = -\int_\Omega u_0 \varphi(0,.) dx
\end{eqnarray}
 for all $ \varphi \in \mathcal{D}([0,T)\times \bar{\Omega}) $ satisfying $ \partial_x \varphi = 0 $ on $(0,T) \times \partial\Omega $.

Furthermore u satisfies for almost every $ t \in (0,T) $
\begin{eqnarray}\label{masscons}
\int_\Omega {u(t,x)dx} = \int_\Omega {u_0(x) dx}
\end{eqnarray}
and
\begin{eqnarray}\label{eq:ineq}
  \| {u(t,.)}\|_{\overset{.}{H}^\frac{\alpha}{2}(\Omega)}^2 + 2\int_0^T \int_\Omega g^2 dx ds \leq \| u_0 \|_{\overset{.}{H}^\frac{\alpha}{2}(\Omega)}^2 
 \end{eqnarray}
 where the function $ g \in L^2(Q) $ satisfies $ g = \partial_x (u^\frac{n}{2} I(u)) - \frac{n}{2} u^\frac{n-2}{2} \partial_x u I(u) $ in $ \mathcal{D}'(\Omega) $, and
 \begin{eqnarray}\label{eq:entropy} 
\int_\Omega G(u(t,x)) dx + \int_0^t{\| u\|_{\overset{.}{H}_N^{\frac{\alpha}{2}+1}(\Omega)}^2 ds \leq \int_\Omega G(u_0) dx}.
 \end{eqnarray} 
 \end{theorem}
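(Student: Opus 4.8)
The plan is to follow the regularization strategy of Imbert and Mellet, but to adapt the compactness step to the Sobolev embeddings available for $\alpha\in(0,1]$. (At the endpoint $\alpha=1$ the argument of \cite{Imbert-mellet} applies verbatim, since there $H^{1/2}(\Omega)\hookrightarrow L^p(\Omega)$ for every $p<\infty$; so the genuinely new case is $\alpha<1$.) First I would regularize at two levels: replace the degenerate mobility $u^n$ by a strictly positive approximation $f_\varepsilon$, for instance $f_\varepsilon(s)=(s^2+\varepsilon)^{n/2}$, bounded below by $\varepsilon^{n/2}>0$, and replace $u_0$ by a smooth strictly positive $u_{0,\varepsilon}$ with $\int_\Omega G(u_{0,\varepsilon})\,dx$ uniformly bounded and $u_{0,\varepsilon}\to u_0$ in $H^{\alpha/2}(\Omega)$. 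For the resulting non-degenerate nonlocal equation $\partial_t u_\varepsilon+\partial_x\big(f_\varepsilon(u_\varepsilon)\,\partial_x I(u_\varepsilon)\big)=0$ I would build an approximate solution by a Galerkin scheme on the eigenfunctions of the Neumann Laplacian, which simultaneously diagonalize $I$ (so $I$ acts as multiplication by $-\lambda_k^{\alpha/2}$ on the $k$-th mode); standard ODE theory on the finite-dimensional system together with the uniform estimates below yields a global solution $u_\varepsilon$ after passing to the limit in the Galerkin dimension.

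Second, I would establish the two a priori estimates uniformly in $\varepsilon$. Testing the regularized equation with $-I(u_\varepsilon)$ gives the energy inequality, which controls $u_\varepsilon$ in $L^\infty(0,T;\overset{.}{H}^{\alpha/2}(\Omega))$ and the flux $g_\varepsilon:=\sqrt{f_\varepsilon(u_\varepsilon)}\,\partial_x I(u_\varepsilon)$ in $L^2(Q)$. Testing with $G_\varepsilon'(u_\varepsilon)$, where $G_\varepsilon''=1/f_\varepsilon$, gives the entropy inequality, which controls $u_\varepsilon$ in $L^2(0,T;\overset{.}{H}_N^{\alpha/2+1}(\Omega))$; this bound is what will eventually force nonnegativity, since $G=+\infty$ on $(-\infty,0)$. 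From the conservative form and the identity $f_\varepsilon(u_\varepsilon)\partial_x I(u_\varepsilon)=\sqrt{f_\varepsilon(u_\varepsilon)}\,g_\varepsilon$, combined with the $L^\infty_t L^p_x$ bound on $u_\varepsilon$ (hence on $u_\varepsilon^{n/2}$) coming from the embedding $H^{\alpha/2}\hookrightarrow L^p$, I would bound $\partial_t u_\varepsilon$ in a negative-order Bochner space $L^2(0,T;W^{-1,r}(\Omega))$. Together with the spatial bound in $H_N^{\alpha/2+1}$, an Aubin--Lions argument then yields strong convergence of a subsequence $u_\varepsilon\to u$ in $L^2(0,T;L^2(\Omega))$, and by interpolation in $L^2(0,T;L^p(\Omega))$ for the admissible range $p<\frac{2}{1-\alpha}$.

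The main obstacle, and the point where the case $\alpha<1$ diverges from $\alpha=1$, is passing to the limit in the nonlinear terms $n\,u_\varepsilon^{\,n-1}\partial_x u_\varepsilon\,I(u_\varepsilon)$ and $u_\varepsilon^{\,n}I(u_\varepsilon)$: the weaker embedding $H^{\alpha/2}\hookrightarrow L^p$ only for $p<\frac{2}{1-\alpha}$ prevents the naive $L^p$-compactness argument from closing, since one cannot push $p$ to infinity. The resolution is to exploit the regularity gained on $I(u_\varepsilon)$. The $L^2(0,T;H_N^{\alpha/2+1})$ bound gives $I(u_\varepsilon)$ bounded in $L^2(0,T;H^{1-\alpha/2}(\Omega))$, and since $1-\tfrac{\alpha}{2}>\tfrac12$ precisely when $\alpha<1$, the one-dimensional embedding $H^{1-\alpha/2}(\Omega)\hookrightarrow C^{0,\frac{1-\alpha}{2}}(\overline\Omega)$ is compact into $C^0(\overline\Omega)$. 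Coupling this with a time-regularity bound on $I(u_\varepsilon)$ I would obtain strong convergence $I(u_\varepsilon)\to I(u)$ in $L^2(0,T;C^0(\overline\Omega))$, i.e. uniformly in space. This compactness of $I(u_\varepsilon)$, combined with the strong $L^p$ convergence of $u_\varepsilon$ and the weak $L^2$ convergence of $\partial_x u_\varepsilon$, is exactly what makes the products converge: $u_\varepsilon^{\,n}I(u_\varepsilon)\to u^nI(u)$ strongly and $u_\varepsilon^{\,n-1}\partial_x u_\varepsilon\,I(u_\varepsilon)\to u^{\,n-1}\partial_x u\,I(u)$ in the sense of distributions.

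Finally, I would recover the weak formulation \eqref{eq:formu} by passing to the limit term by term against test functions $\varphi\in\mathcal{D}([0,T)\times\overline\Omega)$ with $\partial_x\varphi=0$ on the boundary, using the integration by parts that produces the two nonlinear terms and the boundary conditions of \eqref{eq:probl}. Mass conservation \eqref{masscons} follows from the divergence form together with the no-flux boundary condition. The energy inequality \eqref{eq:ineq} and the entropy inequality \eqref{eq:entropy} follow from weak lower semicontinuity of the homogeneous norms and of the convex functional $\int_\Omega G$, after identifying $g$ as the weak $L^2(Q)$ limit of $g_\varepsilon$ (using the strong convergences above to pass to the limit in $f_\varepsilon(u_\varepsilon)$ and $I(u_\varepsilon)$). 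The nonnegativity of $u$ is then immediate from the finiteness of $\int_\Omega G(u(t))$ forced by \eqref{eq:entropy}, and the claimed regularity $u\in L^\infty(0,T;H^{\alpha/2})\cap L^2(0,T;H_N^{\alpha/2+1})$ is inherited from the uniform bounds. I expect the delicate points to be the uniform time-regularity estimate needed for the $C^0$-compactness of $I(u_\varepsilon)$ and the careful bookkeeping of exponents in the Hölder estimates controlling $\partial_t u_\varepsilon$ within the restricted range $p<\frac{2}{1-\alpha}$.
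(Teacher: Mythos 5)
Your overall strategy is the same as the paper's: regularize the mobility by a strictly positive $f_\epsilon$, derive the energy estimate (testing with $-I(u_\epsilon)$, giving $L^\infty_t H^{\alpha/2}$ and the $L^2$ flux bound) and the entropy estimate (testing with $G_\epsilon'$, giving $L^2_t \dot H^{\alpha/2+1}_N$), bound $\partial_t u_\epsilon$ in $L^2(0,T;W^{-1,r})$, apply Aubin--Lions, exploit the extra regularity of $I(u_\epsilon)$ coming from the entropy bound to pass to the limit in the nonlinear terms within the restricted range $p<\frac{2}{1-\alpha}$, identify $g$ as the weak $L^2$ limit of $\sqrt{f_\epsilon(u_\epsilon)}\,\partial_x I(u_\epsilon)$, and get nonnegativity from the blow-up of $G_\epsilon$ on negatives (the paper packages this last step with Egorov plus Fatou, but your Fatou-based version is the same argument). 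Your route to compactness of $I(u_\epsilon)$ differs cosmetically: the paper applies Aubin to $u^\epsilon$ itself in $L^2(0,T;H^s_N)$ for $s<\frac{\alpha}{2}+1$ and then pushes the convergence through $I$ and $\partial_x$ (getting $I(u^\epsilon)\to I(u)$ in $L^2_t L^q_x$ for all $q<\infty$, which also covers the endpoint $\alpha=1$ where $1-\frac{\alpha}{2}=\frac12$ and your embedding $H^{1-\alpha/2}\hookrightarrow C^0$ fails), whereas you aim for $L^2_t C^0_x$ convergence of $I(u_\epsilon)$ directly, which requires the separate time-regularity bound on $I(u_\epsilon)$ that you leave unproved; the paper's ordering of operations avoids having to make sense of $\partial_t I(u_\epsilon)=I(\partial_t u_\epsilon)$ in negative-order spaces.

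The one genuine gap is the construction of the regularized solution by Galerkin projection on the Neumann eigenfunctions. The energy estimate survives projection, since $I$ is diagonal on the eigenbasis, but the entropy estimate does not: $G_\epsilon'(u_N)$ is not in the span of the first $N$ eigenfunctions, and projecting it destroys exactly the sign structure that produces
\begin{eqnarray*}
\iint \partial_x I(u_\epsilon)\,\partial_x u_\epsilon\, \frac{f_\epsilon(u_\epsilon)}{f_\epsilon(u_\epsilon)} = -\int_0^t \| u_\epsilon\|^2_{\overset{.}{H}^{\frac{\alpha}{2}+1}_N(\Omega)}\,ds .
\end{eqnarray*}
This is the classical obstruction for thin-film-type equations, and it is precisely why the paper does not use Galerkin: it runs an implicit Euler scheme whose stationary problem $u+\tau\,\partial_x(f_\epsilon(u)\partial_x I(u))=g$ is solved in $H^{\alpha+1}_N(\Omega)$ by pseudo-monotone operator theory, and in that formulation $\varphi=G_\epsilon'(u)\in H^1(\Omega)$ is an admissible test function, so the entropy inequality is obtained discretely and passed to the limit in $\tau$. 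To repair your scheme you would have to first pass to the limit in the Galerkin dimension and then justify a posteriori that the nondegenerate solution $u_\epsilon$ has enough regularity for the chain rule $\langle \partial_t u_\epsilon, G_\epsilon'(u_\epsilon)\rangle = \frac{d}{dt}\int_\Omega G_\epsilon(u_\epsilon)$ and for the flux computation above; since your compactness argument in $\epsilon$ relies on the uniform $L^2_t \dot H^{\alpha/2+1}_N$ bound before any limit is taken, this justification is a necessary (if standard) missing step, not bookkeeping. A smaller remark: with your even regularization $f_\epsilon(s)=(s^2+\epsilon)^{n/2}$ the nonnegativity argument still works for $n\geq 1$ because $G_\epsilon(-\delta)\to\infty$ as $\epsilon\to 0$, matching the behavior of the paper's choice $f_\epsilon(s)=s_+^n+\epsilon$; and mollifying $u_0$ is unnecessary here since the stationary/Euler construction only needs $u_0\in H^{\alpha/2}(\Omega)$ with \eqref{entropy condition}.
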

 
 \begin{remark}
  The weak formulation \eqref{eq:formu} comes after two integrations by parts of the equation \eqref{equation}. We recall that the function $G:\mathbb{R}_+ \rightarrow \mathbb{R}_+$ is given by \eqref{G}. Note that the space $H_N^s(\Omega)$ is defined via spectral decomposition of $-(-\Delta)^\frac{s}{2}$ (see below).
 \end{remark}
 
 \begin{theorem}[Existence of solutions for $1 < \alpha < 2$] \label{th:th2}
Let $n\geqslant1$ and $\alpha>1$.
For any nonnegative initial condition $u_0 \in H^{\frac{\alpha}{2}}(\Omega)$, there exists a nonnegative function 
\begin{eqnarray*}
u \in C^{\frac{\alpha-1}{2(\alpha+2)}, \frac{\alpha-1}{2}}_{t,x}(Q) \
\end{eqnarray*}
such that
\begin{eqnarray}
\partial_x I(u) \in L_{loc}^2(Q_+)
\end{eqnarray}
and that satisfies
\begin{eqnarray} \label{limitP}
\iint_Q u \partial_t \varphi dt dx + \iint_{Q_+} u^n \partial_x I(u) \partial_x \varphi dx dt = -\int_\Omega u_0 \varphi(0,.) dx
\end{eqnarray}
where $Q_+ = \{u>0\} \cap Q$, for all $ \varphi \in \mathcal{D}([0,T)\times \bar{\Omega}) $ satisfying $\partial_x \varphi = 0 $ on $(0,T) \times \partial\Omega $. 
Furthermore, $u$ satisfies conservation of mass.
\end{theorem}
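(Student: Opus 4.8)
The plan is to follow the regularization scheme of Imbert and Mellet \cite{Imbert-mellet} and to adapt only the compactness step, exploiting the Hölder embedding $H^{\frac{\alpha}{2}}(\Omega) \hookrightarrow C^{0,\frac{\alpha-1}{2}}(\Omega)$ that is available precisely because $\alpha > 1$. First I would replace the degenerate mobility $u^n$ by a non-degenerate approximation $f_\varepsilon(u)$ bounded below by a positive constant, and regularize the datum into a smooth $u_0^\varepsilon > 0$ with $u_0^\varepsilon \to u_0$ in $H^{\frac{\alpha}{2}}(\Omega)$. Note that $\int_\Omega G(u_0^\varepsilon)\,dx < \infty$ holds automatically since $u_0^\varepsilon$ is bounded away from $0$ and $\infty$, even though no entropy bound is assumed on $u_0$ itself; this is exactly the point that lets us weaken the hypothesis of Theorem \ref{th:th1}. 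For each fixed $\varepsilon$ one constructs a solution $u_\varepsilon$ of the non-degenerate problem, exactly as in the case $\alpha = 1$, which stays positive because the approximate entropy inequality associated with the function $G$ (which equals $+\infty$ on $(-\infty,0)$) prevents $u_\varepsilon$ from becoming negative.

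The crucial estimates must be uniform in $\varepsilon$. The energy inequality bounds $u_\varepsilon$ in $L^\infty(0,T; H^{\frac{\alpha}{2}}(\Omega))$ in terms of $\|u_0^\varepsilon\|_{H^{\alpha/2}}$ only, hence uniformly; by the embedding above this controls $u_\varepsilon(t,\cdot)$ in $C^{0,\frac{\alpha-1}{2}}(\Omega)$ uniformly in $t$ and $\varepsilon$, and in particular gives a uniform $L^\infty(Q)$ bound. For time regularity I would write the equation as $\partial_t u_\varepsilon = -\partial_x g_\varepsilon$ with $g_\varepsilon = f_\varepsilon(u_\varepsilon)\partial_x I(u_\varepsilon)$; since the energy dissipation bounds $f_\varepsilon(u_\varepsilon)^{1/2}\partial_x I(u_\varepsilon)$ in $L^2(Q)$ and $f_\varepsilon(u_\varepsilon)^{1/2}$ is bounded in $L^\infty(Q)$, the flux $g_\varepsilon$ is bounded in $L^2(Q)$, so $\partial_t u_\varepsilon$ is bounded in $L^2(0,T; H^{-1}(\Omega))$. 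This yields $\|u_\varepsilon(t_2,\cdot)-u_\varepsilon(t_1,\cdot)\|_{H^{-1}(\Omega)} \leq C|t_2-t_1|^{1/2}$, and interpolating with the uniform $H^{\frac{\alpha}{2}}$ bound at the critical threshold $H^{1/2}\hookrightarrow C^0$ produces exactly the time modulus of exponent $\frac{\alpha-1}{2(\alpha+2)}$. The two estimates together bound $u_\varepsilon$ uniformly in $C^{\frac{\alpha-1}{2(\alpha+2)},\frac{\alpha-1}{2}}_{t,x}(Q)$.

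By the Arzelà–Ascoli theorem I would extract a subsequence converging uniformly on $Q$ to a limit $u$ inheriting this Hölder regularity, with $u \geq 0$ following from the uniform convergence and the positivity of each $u_\varepsilon$. To recover $\partial_x I(u) \in L^2_{loc}(Q_+)$ and to treat the flux, I would work locally on the positivity set: given a compact $K \subset Q_+$ there is $\delta > 0$ with $u \geq \delta$ on $K$, so $u_\varepsilon \geq \delta/2$ on $K$ for small $\varepsilon$ by uniform convergence; there the mobility $f_\varepsilon(u_\varepsilon)$ is bounded below, and the dissipation bound forces $\partial_x I(u_\varepsilon)$ to be bounded in $L^2(K)$. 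Passing to a weak limit identifies $\partial_x I(u)\in L^2(K)$, while $f_\varepsilon(u_\varepsilon)\to u^n$ uniformly on $K$ and $u_\varepsilon \to u$ uniformly on $Q$; the linear term $\iint_Q u_\varepsilon\partial_t\varphi$ converges by uniform convergence and the product $f_\varepsilon(u_\varepsilon)\partial_x I(u_\varepsilon)$ converges on $K$, so one passes to the limit in the weak formulation of the regularized equation, whose nonlinear term is supported in $Q_+$, to obtain \eqref{limitP}. Conservation of mass follows by choosing test functions independent of $x$, the boundary flux vanishing by the boundary conditions.

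The main obstacle is the degeneracy at $\{u=0\}$: the dissipation controls $\partial_x I(u_\varepsilon)$ only where the mobility stays bounded away from zero, so no global $L^2(Q)$ bound on $\partial_x I(u_\varepsilon)$ is available and the flux can be handled only locally on $Q_+$. This is exactly why \eqref{limitP} is stated with the nonlinear integral restricted to $Q_+$ rather than over all of $Q$, and why, in contrast to Theorem \ref{th:th1}, no entropy estimate is asserted for the limit. A secondary technical difficulty is the interpolation producing the sharp time exponent, which sits precisely at the critical one-dimensional embedding $H^{1/2}\hookrightarrow C^0$ and must therefore be realized as a limit over exponents $s\downarrow \tfrac12$ or via a Besov refinement.
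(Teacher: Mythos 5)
Your overall architecture (regularize the mobility, lift the data so the entropy is finite, get uniform energy/Hölder bounds, Arzelà--Ascoli, identify the flux locally on $Q_+$) matches the paper's, but there is one genuine gap: nonnegativity. You assert that each $u_\varepsilon$ ``stays positive because the approximate entropy inequality associated with the function $G$ (which equals $+\infty$ on $(-\infty,0)$) prevents $u_\varepsilon$ from becoming negative,'' and later deduce $u\geq 0$ from the positivity of the $u_\varepsilon$. This is false at fixed $\varepsilon$: once $f_\varepsilon$ is bounded below by a positive constant, the entropy actually available is $G_\varepsilon$ with $G_\varepsilon''=1/f_\varepsilon$ bounded, so $G_\varepsilon$ is \emph{finite} on $(-\infty,0)$ --- the $+\infty$ barrier is a property of the limiting $G$ only. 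Nor can positivity come from the PDE itself: the regularized equation has order $\alpha+2\in(3,4)$ and, like the thin film equation, admits no maximum principle, so positive data need not stay positive. In the paper nonnegativity is extracted only in the limit, via Egorov's theorem combined with $G_\varepsilon(-a)\to+\infty$, and that argument requires $\limsup \int_\Omega G_\varepsilon(u^\varepsilon(t))\,dx<\infty$, i.e.\ an entropy bound on the data uniform along the approximating family. Your single-stage scheme destroys exactly this uniformity: if the lift $\delta_\varepsilon$ tends to $0$ with $\varepsilon$, then $\int_\Omega G(u_0+\delta_\varepsilon)\,dx$ can blow up like $\delta_\varepsilon^{-(n-2)}$ for $n>2$ (logarithmically for $n=2$; for $n<2$ the condition \eqref{entropy condition} is automatic and the issue disappears). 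The paper avoids this by running two \emph{nested} limits: for fixed $\delta>0$ the data $u_{0\delta}=u_0+\delta$ satisfy \eqref{entropy condition}, the full $\epsilon\to0$ limit of Theorem \ref{th:th1}'s construction yields a nonnegative $u_\delta$, and only then is $\delta\to0$ taken using the $\delta$-uniform bounds \eqref{inequalities}, nonnegativity passing trivially to the limit. Your scheme is repairable --- restore the two-parameter structure, or couple the rates so that $\varepsilon\,\delta_\varepsilon^{-(n-2)}\to 0$, which keeps the Egorov argument alive --- but as written the positivity step fails. A smaller omission of the same flavour: the regularized weak formulation's nonlinear term is an integral over all of $Q$, not over $Q_+$, and $\{u=0\}$ may have positive measure; you must show its flux contribution vanishes, which is the paper's estimate \eqref{conv0} (Cauchy--Schwarz against the global dissipation bound, using $\sup_{\{u=0\}} f_\varepsilon(u_\varepsilon)\to 0$ by uniform convergence).

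Apart from this, your route to the time modulus is a correct and genuinely different alternative to the paper's. The paper proves Proposition \ref{prop:holder} by adapting the Bernis--Friedman oscillation lemma with localized test functions, getting $\mu=\gamma/(2\gamma+3)$ for spatial exponent $\gamma=(\alpha-1)/2$, i.e.\ $\frac{\alpha-1}{2(\alpha+2)}$. Your argument --- flux uniformly bounded in $L^2(Q)$, hence $\|u(t_2)-u(t_1)\|_{H^{-1}}\lesssim |t_2-t_1|^{1/2}$, interpolated against the $L^\infty(0,T;H^{\alpha/2}(\Omega))$ bound --- produces the same exponent, but \emph{only} through the endpoint Besov embedding $B^{1/2}_{2,1}(\Omega)\hookrightarrow C^0(\bar\Omega)$ applied to $(H^{-1},H^{\alpha/2})_{\theta,1}$ with $\theta=3/(\alpha+2)$ (which does satisfy the multiplicative interpolation inequality); your fallback ``limit over exponents $s\downarrow\frac12$'' yields every temporal exponent strictly below $\frac{\alpha-1}{2(\alpha+2)}$ and therefore not the class $C^{\frac{\alpha-1}{2(\alpha+2)},\frac{\alpha-1}{2}}_{t,x}(Q)$ stated in the theorem, so you must commit to the Besov endpoint or to the Bernis--Friedman lemma. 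The interpolation route buys brevity and avoids the test-function computation of Appendix A; the paper's route is elementary and self-contained, and is carried out directly on the degenerate-equation solutions $u_\delta$.
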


\begin{theorem}[Strictly positive solutions] \label{th:th3}
Assume $ 0 < \alpha < 2$ and $n > 2+ \frac{2}{\alpha+1}$. There exists a set $ P \subset (0,T) $ such that $ \mid (0,T)\setminus P \mid = 0 $ and the solution $ u $ constructed as in Theorem \ref{th:th1} satisfies $ u(t,.) \in C^{0,\beta}(\Omega) $ for all $ t \in P $ and for all $ \beta < min\{1,\frac{\alpha + 1}{2}\} $ and $ u(t,.) $ is strictly positive in $ \Omega $. Furthermore, u is a solution of
\begin{eqnarray*}
u_t + \partial_x J = 0 \quad \quad \quad  \text{in } \mathcal{D}'(\Omega)
\end{eqnarray*}
where
\begin{eqnarray*}
J(t,.) = u^n \partial_x I(u) \in L^1(\Omega) \quad \quad \quad \text{for all } t\in P.
\end{eqnarray*}
\end{theorem}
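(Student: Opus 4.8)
The plan is to combine the entropy estimate \eqref{eq:entropy}, whose right‑hand side is finite by hypothesis \eqref{entropy condition}, with the one–dimensional embedding of $H_N^{\frac{\alpha}{2}+1}(\Omega)$ into Hölder spaces. First I would set
\[
P = \Big\{\, t \in (0,T) : u(t,\cdot) \in H_N^{\frac{\alpha}{2}+1}(\Omega),\ \textstyle\int_\Omega G(u(t,x))\,dx < \infty,\ g(t,\cdot)\in L^2(\Omega) \,\Big\}.
\]
Because \eqref{eq:entropy} keeps $\int_\Omega G(u(t))\,dx$ finite and the time integral of the $\dot H_N^{\frac{\alpha}{2}+1}$ norm bounded, while \eqref{eq:ineq} controls $\iint_Q g^2$, the complement $(0,T)\setminus P$ is Lebesgue–null. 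For $t\in P$ the embedding $H^{\frac{\alpha}{2}+1}(\Omega)\hookrightarrow C^{0,\beta}(\Omega)$, valid in dimension one for every $\beta<\frac{\alpha}{2}+1-\frac12=\frac{\alpha+1}{2}$ (and capped at $\beta<1$), yields $u(t,\cdot)\in C^{0,\beta}(\Omega)$ for all $\beta<\min\{1,\frac{\alpha+1}{2}\}$, which is the asserted regularity.

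The heart of the proof is strict positivity, which I would obtain by contradiction using $G(s)\sim \frac{1}{(n-2)(n-1)}\,s^{-(n-2)}$ as $s\to 0^+$ (recall $n>2$ here). Fix $t\in P$ and suppose $u(t,x_0)=0$ for some $x_0\in\bar\Omega$; the point is to quantify the vanishing rate. When $0<\alpha\le 1$ the Hölder bound gives directly $u(t,x)=|u(t,x)-u(t,x_0)|\le C|x-x_0|^\beta$ for any $\beta<\frac{\alpha+1}{2}$. When $\alpha>1$ this is too weak, so instead I would use $\partial_x u(t,\cdot)\in H^{\alpha/2}(\Omega)\hookrightarrow C^{0,\frac{\alpha-1}{2}}(\Omega)$: since $x_0$ is a minimum of the nonnegative $C^1$ function $u(t,\cdot)$, or a boundary point where the Neumann condition forces $\partial_x u=0$, one has $\partial_x u(t,x_0)=0$, so $|\partial_x u(t,x)|\le C|x-x_0|^{\frac{\alpha-1}{2}}$ and, after integration, $u(t,x)\le C|x-x_0|^{\frac{\alpha+1}{2}}$. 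In both regimes the effective vanishing rate is (arbitrarily close to, or exactly) $\gamma=\frac{\alpha+1}{2}$, whence near $x_0$
\[
G(u(t,x)) \ \ge\ c\,u(t,x)^{-(n-2)} \ \ge\ c'\,|x-x_0|^{-\gamma(n-2)} .
\]
Since $n>2+\frac{2}{\alpha+1}$ forces $\gamma(n-2)=\frac{(\alpha+1)(n-2)}{2}>1$, the right‑hand side is non‑integrable in one dimension, giving $\int_\Omega G(u(t,x))\,dx=\infty$ and contradicting $t\in P$. Hence $u(t,\cdot)>0$ on $\bar\Omega$, and by continuity on the compact $\bar\Omega$ there is $\delta(t)>0$ with $u(t,\cdot)\ge\delta(t)$.

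Finally I would identify the flux and the equation on $P$. Expanding the relation defining $g$ in Theorem \ref{th:th1} gives, distributionally, $g=u^{\frac n2}\partial_x I(u)$; using the lower bound $u(t,\cdot)\ge\delta(t)$ we get $\partial_x I(u)=u^{-\frac n2}g\in L^2(\Omega)$, so $J(t,\cdot)=u^n\partial_x I(u)=u^{\frac n2}g\in L^2(\Omega)\subset L^1(\Omega)$. It remains to rewrite \eqref{eq:formu}: integrating the last term by parts (the boundary term vanishes since $\partial_x\varphi=0$ on $\partial\Omega$),
\[
-\iint_Q u^n I(u)\,\partial^2_{xx}\varphi\,dx\,dt = \iint_Q \big(n u^{n-1}\partial_x u\, I(u) + u^n \partial_x I(u)\big)\partial_x\varphi\,dx\,dt,
\]
so the two occurrences of $n u^{n-1}\partial_x u\,I(u)\,\partial_x\varphi$ cancel and \eqref{eq:formu} collapses to $\iint_Q u\,\partial_t\varphi + \iint_Q J\,\partial_x\varphi = -\int_\Omega u_0\varphi(0,\cdot)$, which is exactly $u_t+\partial_x J=0$ in $\mathcal D'(\Omega)$.

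I expect the main obstacle to be the positivity step for $\alpha>1$: the plain Hölder bound on $u$ only reaches exponent $\beta<1$ and misses the sharp threshold, so one genuinely needs the Hölder continuity of $\partial_x u$ together with the fact that a zero of the nonnegative solution is a critical point, in order to upgrade the vanishing rate from $|x-x_0|^{\beta}$ to $|x-x_0|^{\frac{\alpha+1}{2}}$. By contrast, justifying the integration by parts in the flux identification — namely that every product is integrable once the degeneracy is removed by the lower bound $\delta(t)$ — is routine.
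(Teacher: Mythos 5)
Your proposal is correct, but it takes a genuinely different route from the paper's. The paper never argues at the level of the limit solution alone: it goes back to the regularized approximations $u^\epsilon$, applies Aubin's lemma to get $u^\epsilon \to u$ in $L^2(0,T;C^{0,\beta}(\Omega))$, extracts a full-measure set $P$ of times where $u^\epsilon(t,\cdot)\to u(t,\cdot)$ strongly in $C^{0,\beta}(\Omega)$, proves positivity by the same blow-up-of-$\int G(u(t))$ contradiction you use, and then — this is where the strong $C^\beta$ convergence is really needed — deduces $u^\epsilon(t,\cdot)\geqslant\delta$ for $\epsilon$ small, runs a Fatou argument on the energy estimate to get $\liminf_{\epsilon\to 0}\int_\Omega \lvert\partial_x I(u^\epsilon)\rvert^2\,dx<\infty$ for almost every $t$, hence $u^\epsilon(t,\cdot)\rightharpoonup u(t,\cdot)$ weakly in $H_N^{\alpha+1}(\Omega)$, and passes to the limit in the flux $J_\epsilon=f_\epsilon(u^\epsilon)\partial_x I(u^\epsilon)$. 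You instead stay entirely with the limiting objects of Theorem \ref{th:th1} (the entropy bound, $u(t)\in H_N^{\frac{\alpha}{2}+1}(\Omega)$, and $g(t)\in L^2(\Omega)$) and recover the flux as $J=u^{n/2}g$ via $\partial_x I(u)=u^{-n/2}g$; this trades the paper's compactness/Fatou step for a routine-but-not-free justification of the product rule and of multiplying the distribution $\partial_x I(u)$ by $u^{-n/2}$ in fractional Sobolev spaces, and it yields the slightly stronger conclusion $J(t,\cdot)\in L^2(\Omega)$, while the paper's route buys in exchange the weak $H_N^{\alpha+1}$ regularity of $u(t,\cdot)$ (used there to assert $u_x=0$ on $\partial\Omega$). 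The most substantial divergence is the positivity step for $\alpha>1$: the paper caps the Hölder exponent at $\beta<1$ and its proof explicitly assumes $n>3$ in that regime, which does not cover the full range $n>2+\frac{2}{\alpha+1}$ (a threshold strictly below $3$ when $\alpha>1$) claimed in the statement; your upgrade of the vanishing rate to $\lvert x-x_0\rvert^{\frac{\alpha+1}{2}}$ via $\partial_x u(t,\cdot)\in C^{0,\frac{\alpha-1}{2}}(\Omega)$ and the criticality of a zero (interior minimum, or Neumann condition at the boundary, the latter legitimate since $H_N^{\frac{\alpha}{2}+1}$ with $\frac{\alpha}{2}+1>\frac{3}{2}$ encodes $\partial_x u=0$ on $\partial\Omega$) actually proves the theorem under its stated hypothesis, repairing a mismatch in the paper's own argument (whose text similarly writes $n>4$ instead of the stated threshold in the case $\alpha<1$). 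Finally, your cancellation of the two $n u^{n-1}\partial_x u\,I(u)\,\partial_x\varphi$ terms after integrating \eqref{eq:formu} by parts reproduces exactly the weak formulation the paper obtains by passing to the limit in the regularized equation, so the conclusions coincide.
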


\subsection*{Organization of the paper}

The paper is organized as follows: in Section 2, we define the nonlocal operator $I $ by using the spectral decomposition of the Laplacian and we write an integral representation for it. Then we prove two important Propositions used in the proofs. In Section 3, we study a regularized problem before proving our Theorems in Section 4.  

\subsection*{Notation}
In this work, we denote $\Omega = (0,1)$ and $ Q = (0,T) \times \Omega $. The space $ H^s_N(\Omega) $ is the functional space defined in \cite[Section 3.1]{Imbert-mellet} by
\begin{eqnarray*}
H^s_N(\Omega) = \left\{ u=\sum_{k=0}^\infty c_k \varphi_k ; \sum_{k=0}^\infty c_k^2(1+\lambda_k^s) < +\infty \right\}
\end{eqnarray*}
where $ \{ \lambda_k , \varphi_k \}_{k \geq 0} $ are the eigenvalues and corresponding eigenvectors of the Laplacian operator in $\Omega $ with Neumann boundary conditions on $ \partial\Omega $ with the norm 
\begin{eqnarray}
\| u \|_{H^s_N(\Omega)}^2 = \sum_{k=0}^\infty c_k^2(1+\lambda_k^s),
\end{eqnarray}
equivalently to
\begin{eqnarray}
\| u \|_{H^s_N(\Omega)}^2 = \left(\int_\Omega u dx\right)^2 + \| u \|_{\overset{.}{H}^s_N(\Omega)}^2
\end{eqnarray}
where the homogeneous norm is given by
\begin{eqnarray}
\| u \|_{\overset{.}{H}_N^s(\Omega)}^2 = \sum_{k=0}^\infty c_k^2 \lambda_k^s.
\end{eqnarray}
Note that $H^s_N(\Omega) = H^s(\Omega)$ for all $0\leqslant s < \frac{3}{2}$ (see \cite{sobolevspaces}) with equivalent norms. Indeed,
\begin{eqnarray*}
\|u\|_{H^s(\Omega)}^2 = \|u\|_{L^2(\Omega)}^2 + \|u\|_{\overset{.}{H}^s(\Omega)}^2 
\end{eqnarray*}
and since we are in dimension 1 we have for these values of $s$
\begin{eqnarray*}
\|u\|_{\overset{.}{H}^s_N(\Omega)} = \|u\|_{\overset{.}{H}^s(\Omega)}
\end{eqnarray*}
Note also that we have 
\begin{eqnarray*}
& \int_\Omega u dx \leqslant C(\Omega) \|u\|_2 \quad (\text{H$\overset{..}{o}$lder inequality}),\\
& \|u\|_2^2 \leqslant C(\Omega) \|(-\Delta)^\frac{s}{2}u\|_2^2 \leqslant c \|u\|_{\overset{.}{H}^s_N(\Omega)}^2 (\text{fractional Poincaré's inequality}).
\end{eqnarray*}
Finally, as usual $ s_+ = $ max$\{0,s\} $. 

\section{Preliminaries}
\subsection{Operator I}
\textit{Spectral definition.}\quad We define the operator I by
\begin{eqnarray*}
I : \sum_{k = 0}^\infty c_k \varphi_k \longrightarrow -\sum_{k = 0}^\infty c_k \lambda_k^\frac{\alpha}{2} \varphi_k \quad \quad \text{ which maps } H_N^\alpha(\Omega) \text{ onto } L^2(\Omega) 
\end{eqnarray*}
where $ \{ \lambda_k , \varphi_k \}_{k \geq 0} $ are the eigenvalues and corresponding eigenvectors of the Laplacian operator in $\Omega $ with Neumann boundary conditions on $ \partial\Omega $:
\begin{eqnarray*}
\begin{cases}
-\Delta \varphi_k = \lambda_k \varphi_k  \text{ in } \Omega, \\
\partial_\nu \varphi_k = 0  \text{ on } \partial\Omega, \\
\int_\Omega \varphi_k^2 dx = 1.
\end{cases}
\end{eqnarray*}

\textit{Integral representation.}\quad The operator $I$ can also be represented as a singular integral operator. We will prove the following.
\begin{proposition}
Consider a smooth function $u:\Omega \rightarrow \mathbb{R}$. Then for all $x\in\Omega$,
\begin{eqnarray*}
I(u)(x) = \int_\Omega (u(y) - u(x)) K(x,y) dy
\end{eqnarray*}
where $K(x,y)$ is defined as follows. For all $x, y \in \Omega$ 
\begin{eqnarray*}
K(x,y) = c_\alpha \sum_{k\in\mathbb{Z}} \left( \frac{1}{\mid x - y -2k \mid^{1+\alpha}} + \frac{1}{\mid x + y -2k \mid^{1+\alpha}} \right)
\end{eqnarray*}
where $c_\alpha$ is a constant depending only on $\alpha$.
\end{proposition}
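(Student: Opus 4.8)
The plan is to combine the subordination (Balakrishnan) formula for fractional powers of a nonnegative self-adjoint operator with an explicit, image-based expression for the Neumann heat kernel on $\Omega=(0,1)$. Recall that the Neumann eigenpairs are $\lambda_0=0,\ \varphi_0\equiv 1$ and $\lambda_k=(k\pi)^2,\ \varphi_k=\sqrt2\cos(k\pi x)$ for $k\ge1$, and that by the spectral definition $-I=(-\Delta)^{\alpha/2}$ acts by $c_k\mapsto c_k\lambda_k^{\alpha/2}$. First I would write, for each eigenvalue and with $s=\alpha/2\in(0,1)$, the elementary identity
\[
\lambda_k^{\alpha/2}=\frac{1}{\Gamma(-\alpha/2)}\int_0^\infty\bigl(e^{-\lambda_k t}-1\bigr)\,\frac{dt}{t^{1+\alpha/2}},
\]
which is valid because $\Gamma(-\alpha/2)<0$ for $\alpha\in(0,2)$. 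Multiplying by $c_k\varphi_k$ and summing, the factor $e^{-\lambda_k t}$ reconstructs the Neumann heat semigroup $P_t=e^{t\Delta}$, so that
\[
I(u)(x)=\frac{-1}{\Gamma(-\alpha/2)}\int_0^\infty\bigl(P_tu(x)-u(x)\bigr)\,\frac{dt}{t^{1+\alpha/2}}.
\]

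Next I would use that $P_t$ preserves constants (as $\lambda_0=0$), i.e. $\int_\Omega p_t(x,y)\,dy=1$, to rewrite the difference in the symmetric-increment form $P_tu(x)-u(x)=\int_\Omega p_t(x,y)\bigl(u(y)-u(x)\bigr)\,dy$, where $p_t(x,y)=\sum_k e^{-\lambda_k t}\varphi_k(x)\varphi_k(y)$ is the Neumann heat kernel. The key point is then an explicit formula for $p_t$: by the method of images on $(0,1)$ (even reflections across both endpoints, so that the orbit of the source point $y$ is $\{y+2k:k\in\mathbb Z\}\cup\{-y+2k:k\in\mathbb Z\}$),
\[
p_t(x,y)=\sum_{k\in\mathbb Z}\Bigl(G_t(x-y-2k)+G_t(x+y-2k)\Bigr),\qquad G_t(z)=\frac{1}{\sqrt{4\pi t}}\,e^{-z^2/4t}.
\]
I would justify this identity by checking that the right-hand side solves the heat equation with Neumann conditions and the correct initial data, or equivalently by applying the Poisson summation formula to pass between the Gaussian image sum and the cosine series $1+2\sum_{k\ge1}e^{-(k\pi)^2t}\cos(k\pi x)\cos(k\pi y)$.

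Then, inserting this kernel and exchanging the order of the $t$- and $y$-integrations, I would identify
\[
K(x,y)=\frac{-1}{\Gamma(-\alpha/2)}\int_0^\infty p_t(x,y)\,\frac{dt}{t^{1+\alpha/2}},
\]
and evaluate each image contribution by the substitution $s=z^2/4t$, which turns $\int_0^\infty G_t(z)\,t^{-1-\alpha/2}\,dt$ into a Gamma integral equal to $\tfrac{2^\alpha\Gamma(\frac{1+\alpha}{2})}{\sqrt\pi}\,|z|^{-1-\alpha}$. Summing over the images then yields the stated kernel with $c_\alpha=-2^\alpha\Gamma(\tfrac{1+\alpha}{2})/\bigl(\sqrt\pi\,\Gamma(-\tfrac{\alpha}{2})\bigr)>0$.

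The main obstacle is the rigorous justification of the interchange of integrations (Fubini/Tonelli) and the convergence of the integral defining the action of $K$ near the diagonal $y=x$, where the $k=0$ factor $|x-y|^{-1-\alpha}$ is nonintegrable for $\alpha\ge1$. This is precisely where the increment structure $u(y)-u(x)$ must be used: smoothness of $u$ gives $|u(y)-u(x)|\le C|x-y|$ and, after a symmetric (odd-part) cancellation, the combined integrand behaves like $t^{-\alpha/2}$ as $t\to0$, which is integrable since $\alpha/2<1$; for $\alpha\ge1$ the expression $\int_\Omega\bigl(u(y)-u(x)\bigr)K(x,y)\,dy$ is understood in the principal-value sense and the same cancellation makes it finite. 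The off-diagonal decay of $G_t$ and the rapid convergence of the image series for $x\ne y$ handle the large-$k$ and large-$t$ regimes, so that Fubini applies on $\{|x-y|>\varepsilon\}$ and one passes to the limit $\varepsilon\to0$.
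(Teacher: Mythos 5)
Your proof is correct, and it takes a genuinely different route from the paper's. The paper's argument is extension-based: it replaces $u$ by its even extension to $(-1,1)$, extends $2$-periodically to $\mathbb{R}$ to get $\bar{u}$, invokes the standard singular-integral representation $-(-\Delta)^{\frac{\alpha}{2}}\bar{u}(x)=c_\alpha\int_{\mathbb{R}}(\bar{u}(y)-\bar{u}(x))\,|y-x|^{-1-\alpha}\,dy$ of the whole-line operator, and then folds the integral over $\mathbb{R}$ back onto $(0,1)$ interval by interval using the $2$-periodicity and evenness of $\bar{u}$; the image sum defining $K$ appears directly as the bookkeeping of that folding. You instead stay with the spectral definition and reconstruct the kernel analytically: Balakrishnan's identity $\lambda^{\alpha/2}=\Gamma(-\tfrac{\alpha}{2})^{-1}\int_0^\infty(e^{-\lambda t}-1)\,t^{-1-\alpha/2}\,dt$ turns $I$ into a subordinated Neumann heat semigroup, the method of images (equivalently, Poisson summation against the cosine series) gives $p_t$ explicitly, and the substitution $s=z^2/4t$ evaluates $\int_0^\infty G_t(z)\,t^{-1-\alpha/2}\,dt$ as a Gamma integral; your constants check out, including the sign ($\Gamma(-\tfrac{\alpha}{2})<0$ on $(0,2)$, so $c_\alpha>0$), and you obtain the explicit value $c_\alpha=-2^\alpha\Gamma(\tfrac{1+\alpha}{2})/\bigl(\sqrt{\pi}\,\Gamma(-\tfrac{\alpha}{2})\bigr)$, which the paper leaves unspecified. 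What each approach buys: the paper's proof is shorter and more elementary given the known whole-line formula, but it silently assumes that the spectral operator $I$ applied to $u$ coincides with the whole-line fractional Laplacian of the even-periodic extension; your derivation proves the representation directly from the spectral decomposition that actually defines $I$, and the Gaussian localization of $p_t$ cleanly separates the diagonal singularity from the boundary images. Both arguments share the same analytic caveat, about which you are more candid than the paper: for $\alpha\geq 1$ the integral against $K$ (exactly like the whole-line formula the paper invokes) is not absolutely convergent across the diagonal and must be read as a principal value, with the first-order term $u'(x)(y-x)$ cancelling by near-symmetry of the kernel; your small-time bound $t^{-\alpha/2}$ for the subordination integral and the $\varepsilon$-truncation argument handle this correctly, so your writeup is, if anything, more rigorous than the one in the paper on this point.
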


\begin{proof}
Let's replace $\Omega$ by $(-1,1)$ and $u$ by its even extension to $(-1,1)$. Then let's extend $u$ periodically to $\mathbb{R}$ and let $\bar{u}$ be this extension.
For $x\in \Omega$,
\begin{align*}
I(u)(x) & = - (-\Delta)^\frac{\alpha}{2} \bar{u}(x)
= c_\alpha \int_\mathbb{R} (\bar{u}(y)-\bar{u}(x)) \frac{dy}{\mid y-x \mid^{1+\alpha}} \\
 & = c_\alpha \sum_{k\in\mathbb{Z}} \int_{-1+2k}^{1+2k} (\bar{u}(y)-u(x)) \frac{dy}{\mid y-x \mid^{1+\alpha}} \\ & = c_\alpha \int_{-1}^1 (\bar{u}(y)-u(x)) \left( \sum_{k\in\mathbb{Z}}\frac{1}{\mid y+2k-x \mid^{1+\alpha}} \right) dy \quad & \text{ because } \bar{u} \text{ is 2-periodic} \\ 
& = c_\alpha \int_0^1 (u(y) - u(x))\sum_{k\in\mathbb{Z}} \left( \frac{1}{\mid x - y -2k \mid^{1+\alpha}} + \frac{1}{\mid x + y -2k \mid^{1+\alpha}} \right) \quad & \text{ because } \bar{u} \text{ is even.}
\end{align*}
\end{proof}
 Now we can easily conclude the following Corollary.
\begin{corollary} \label{corollary}
Consider two smooth functions $u,\varphi:\Omega \rightarrow \mathbb{R}$. Then 
\begin{eqnarray}
\int_\Omega I(u)(x) \varphi(x) dx = \int_\Omega u(x) I(\varphi)(x) dx
\end{eqnarray} 
\end{corollary}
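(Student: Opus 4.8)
The plan is to deduce the self-adjointness of $I$ from the integral representation just established, the essential point being that the kernel $K$ is symmetric. First I would verify that $K(x,y) = K(y,x)$ for all $x,y \in \Omega$. The second family of terms, $|x+y-2k|^{-(1+\alpha)}$, is manifestly invariant under exchanging $x$ and $y$. For the first family, swapping $x$ and $y$ turns $|x-y-2k|^{-(1+\alpha)}$ into $|y-x-2k|^{-(1+\alpha)} = |x-y-2(-k)|^{-(1+\alpha)}$; since the summation runs over all $k \in \mathbb{Z}$, reindexing $k \mapsto -k$ leaves the total sum unchanged. Hence $K$ is symmetric.

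With symmetry of $K$ in hand, I would insert the representation $I(u)(x) = \int_\Omega (u(y)-u(x))K(x,y)\,dy$ into the left-hand side and symmetrize. Starting from the double integral $\int_\Omega\int_\Omega (u(y)-u(x))K(x,y)\varphi(x)\,dy\,dx$, relabeling the dummy variables $x \leftrightarrow y$ and using $K(y,x)=K(x,y)$ produces a second expression; averaging the two yields the manifestly symmetric bilinear form
\[
\int_\Omega I(u)\varphi\,dx = -\tfrac12 \int_\Omega\int_\Omega (u(y)-u(x))(\varphi(y)-\varphi(x)) K(x,y)\,dy\,dx .
\]
The right-hand side is invariant under exchanging the roles of $u$ and $\varphi$, so it equals $\int_\Omega I(\varphi)\, u\,dx$, which is exactly the claimed identity.

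The one point requiring care is integrability near the diagonal $x=y$, where $K(x,y)$ behaves like $|x-y|^{-(1+\alpha)}$. For the symmetrized form this is harmless: since $u$ and $\varphi$ are smooth, the factor $(u(y)-u(x))(\varphi(y)-\varphi(x))$ vanishes like $|x-y|^2$, so the integrand is controlled by $|x-y|^{1-\alpha}$, which is integrable on $\Omega\times\Omega$ for every $\alpha \in (0,2)$. This is what justifies Fubini in the relabeling step and shows the bilinear form is absolutely convergent, so that no principal-value subtlety survives after symmetrization. (In the unsymmetrized single integral defining $I(u)(x)$ one should, for $\alpha \geqslant 1$, read the expression as a principal value, its convergence resting on the cancellation of the odd part of the kernel; this is the main technical obstacle, and symmetrizing is precisely the device that sidesteps it.)

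Alternatively, the identity can be read off directly from the spectral definition: expanding $u=\sum_k c_k\varphi_k$ and $\varphi=\sum_k d_k\varphi_k$ in the orthonormal Neumann eigenbasis and using $I(u)=-\sum_k c_k \lambda_k^{\alpha/2}\varphi_k$, both $\int_\Omega I(u)\varphi\,dx$ and $\int_\Omega u\,I(\varphi)\,dx$ reduce to $-\sum_k c_k d_k \lambda_k^{\alpha/2}$, which is visibly symmetric in $u$ and $\varphi$. This is the most economical route; the integral-representation argument above is the one suggested by the placement of the Corollary immediately after the Proposition.
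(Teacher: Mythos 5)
Your proposal is correct and follows exactly the route the paper intends: the Corollary is stated as an immediate consequence of the integral representation, the point being precisely the symmetry $K(x,y)=K(y,x)$ of the kernel and the resulting symmetric bilinear form $-\tfrac12\iint (u(y)-u(x))(\varphi(y)-\varphi(x))K(x,y)\,dy\,dx$, which the paper leaves implicit and you have spelled out, including the integrability check near the diagonal that justifies the symmetrization. Your alternative spectral argument is also valid and arguably shorter, but the kernel-symmetry argument is the one the paper's placement of the Corollary suggests.
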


\subsection{Important identities}
As \cite[Section 3]{Imbert-mellet}, the semi-norms $\| . \|_{\overset{.}{H}^\frac{\alpha}{2}(\Omega)} $, $\| . \|_{\overset{.}{H}^\alpha_N(\Omega)} $, $\| . \|_{\overset{.}{H}^{\frac{\alpha}{2}+1}_N(\Omega)} $ and $\| . \|_{\overset{.}{H}^{\alpha + 1}_N(\Omega)} $ are related to the operator $ I $ by important and very useful equalities. 

\begin{proposition} \label{identities}
\begin{enumerate}
\item For all $u \in H^\frac{\alpha}{2}(\Omega),$  we have $- \langle I(u),u \rangle = \| u \|^2_{\overset{.}{H}^\frac{\alpha}{2}(\Omega)}.$
\item For all $u \in H_N^\alpha(\Omega),$  we have $\| u \|_{\overset{.}{H}^\alpha_N(\Omega)}^2 = \int_\Omega I(u)^2 dx.$
\item For all $u \in H^{\frac{\alpha}{2}+1}_N(\Omega),$  we have $\| u \|_{\overset{.}{H}^{\frac{\alpha}{2}+1}_N(\Omega)}^2 = - \int_\Omega I(u)_x u_x dx.$
\item For all $u \in H^{\alpha + 1}_N(\Omega),$  we have $\| u \|_{\overset{.}{H}^{\alpha + 1}_N(\Omega)}^2 = \int_\Omega I(u)_x^2 dx.$
\end{enumerate}
\end{proposition}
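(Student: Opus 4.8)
The plan is to diagonalize every quantity in the Neumann eigenbasis $\{\varphi_k\}_{k\geq0}$ and read off the four equalities from Parseval's identity, the only non-definitional ingredient being an integration-by-parts formula for the derivatives $\varphi_k'$. Write $u=\sum_{k\geq0}c_k\varphi_k$, so that by definition $I(u)=-\sum_k c_k\lambda_k^{\alpha/2}\varphi_k$ and $\|u\|_{\overset{.}{H}^s_N(\Omega)}^2=\sum_k c_k^2\lambda_k^s$. I would first record the orthogonality of the derivatives,
\begin{equation*}
\int_\Omega \varphi_k'\varphi_j'\,dx=\lambda_k\,\delta_{kj},
\end{equation*}
obtained by integrating by parts on $(0,1)$: the boundary term $[\varphi_k'\varphi_j]_0^1$ vanishes by the Neumann condition $\varphi_k'(0)=\varphi_k'(1)=0$, and $-\varphi_k''=\lambda_k\varphi_k$ together with $\int_\Omega\varphi_k\varphi_j=\delta_{kj}$ gives the value $\lambda_k\delta_{kj}$. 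This single formula is the computational core of the two identities involving $x$-derivatives.

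Each identity then follows by a direct spectral computation. For (1), the duality pairing gives $\langle I(u),u\rangle=-\sum_k c_k^2\lambda_k^{\alpha/2}$; since $\alpha/2<3/2$ the Neumann homogeneous norm agrees with the standard one (as recalled in the Notation), so $-\langle I(u),u\rangle=\|u\|_{\overset{.}{H}^{\alpha/2}(\Omega)}^2$. For (2), Parseval applied to $I(u)=-\sum_k c_k\lambda_k^{\alpha/2}\varphi_k$ yields $\int_\Omega I(u)^2\,dx=\sum_k c_k^2\lambda_k^\alpha=\|u\|_{\overset{.}{H}^\alpha_N(\Omega)}^2$. For (3), expanding and inserting the orthogonality formula gives $-\int_\Omega I(u)_x u_x\,dx=\sum_{k,j}c_kc_j\lambda_k^{\alpha/2}\int_\Omega\varphi_k'\varphi_j'=\sum_k c_k^2\lambda_k^{\alpha/2+1}=\|u\|_{\overset{.}{H}^{\alpha/2+1}_N(\Omega)}^2$. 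For (4), the same expansion of $\int_\Omega I(u)_x^2\,dx$ produces $\sum_k c_k^2\lambda_k^{\alpha+1}=\|u\|_{\overset{.}{H}^{\alpha+1}_N(\Omega)}^2$.

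I expect the main (and essentially the only) difficulty to be the justification of these term-by-term manipulations, i.e. that the eigenfunction expansions of $u_x$ and $I(u)_x$ converge in $L^2(\Omega)$ and that summation commutes with integration. The clean route is to prove each identity first for finite linear combinations of eigenfunctions, where every series is a finite sum and all integrals are classical, and then extend to the full space by density, using that under the stated membership hypothesis each side is continuous for the relevant norm $\|\cdot\|_{\overset{.}{H}^s_N(\Omega)}$. This density step matters most for identity (3): for a general $u\in H^{\alpha/2+1}_N(\Omega)$ one has $u_x\in L^2(\Omega)$ but $I(u)_x$ need not lie in $L^2(\Omega)$, so the left-hand side is to be read as the continuous extension of the bilinear form $(u,v)\mapsto\sum_k c_k(u)c_k(v)\lambda_k^{\alpha/2+1}$ (equivalently, obtained by distributing the derivatives symmetrically between the two factors) rather than as a literal product of two $L^2$ functions. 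For identities (2) and (4) the hypotheses $u\in H^\alpha_N(\Omega)$, respectively $u\in H^{\alpha+1}_N(\Omega)$, already guarantee $I(u)\in L^2(\Omega)$, respectively $I(u)_x\in L^2(\Omega)$, so those integrals are genuine and no reinterpretation is needed.
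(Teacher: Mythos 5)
Your proof is correct and follows essentially the same route as the paper: diagonalization in the Neumann eigenbasis, Parseval's identity, and the orthogonality relation $\int_\Omega \varphi_k'\varphi_j'\,dx=\lambda_k\delta_{kj}$ obtained by integrating by parts using the Neumann boundary condition. Your closing observation about identity (3) is in fact a refinement of the paper's argument, which writes $\partial_x I(u)=-\sum_k c_k\lambda_k^{\alpha/2}\partial_x\varphi_k$ ``in $L^2(\Omega)$'' even though for general $u\in H^{\frac{\alpha}{2}+1}_N(\Omega)$ this series only converges in $H^{-\frac{\alpha}{2}}$, so the left-hand side must be read as the duality pairing or continuous extension you describe.
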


\begin{proof}
Note that if $ u \in H^\frac{\alpha}{2}(\Omega) $ then $ I(u) \in H^{-\frac{\alpha}{2}}(\Omega) $ and
\begin{eqnarray*}
\langle I(u),v \rangle_{H^{-\frac{\alpha}{2}}(\Omega),H^\frac{\alpha}{2}(\Omega)} = - \sum_{k = 0}^\infty c_k \lambda_k^\frac{\alpha}{2} d_k
\end{eqnarray*}
where $ v = \sum_{k = 0}^\infty d_k \varphi_k \in H^\frac{\alpha}{2}(\Omega) $ and $ u = \sum_{k = 0}^\infty c_k \varphi_k $, so
\begin{eqnarray*}
 - \int u I(u) = \sum_{k = 0}^\infty c_k^2 \lambda_k^\frac{\alpha}{2} = \| u \|^2_{\overset{.}{H}^\frac{\alpha}{2}(\Omega)}.
\end{eqnarray*}

The second equality is actually very easy to prove since $ I(u) = -\sum_{k = 0}^\infty c_k \varphi_k^\frac{\alpha}{2} $. Indeed,
\begin{eqnarray*}
\int_\Omega I(u)^2 dx = \sum_{k = 0}^\infty c_k^2 \varphi_k^\alpha = \| u \|^2_{\overset{.}{H}_N^\alpha(\Omega)}.
\end{eqnarray*}

In order to prove the other equalities, we note that $ (\partial_x\varphi_k)_k $ form an orthogonal basis of $ L^2(\Omega) $.
We write
\begin{align*}
 & u_x = \sum_{k = 0}^\infty c_k \partial_x\varphi_k  \text{ in } L^2(\Omega). \\
\text{and } & \partial_x I(u) = -\sum_{k = 1}^\infty c_k \lambda_k^\frac{\alpha}{2} \partial_x\varphi_k \text{ in } L^2(\Omega)
\end{align*}
so
\begin{align*}
- \int_\Omega I(u)_x u_x dx & =  \sum_{k = 0}^\infty c_k^2 \lambda_k^\frac{\alpha}{2} \int_\Omega \partial_x\varphi_k^2 dx = \sum_{k = 0}^\infty c_k^2 \lambda_k^\frac{\alpha}{2} \int_\Omega \varphi_k (- \partial_{xx}\varphi_k) dx \\
& = \sum_{k = 0}^\infty c_k^2 \lambda_k^\frac{\alpha}{2} \int_\Omega \lambda_k \varphi_k^2 dx = \sum_{k = 0}^\infty c_k^2 \lambda_k^{\frac{\alpha}{2}+1} = \| u \|_{\overset{.}{H}^{\frac{\alpha}{2}+1}_N(\Omega)}^2.
\end{align*}
For the last equality,
\begin{align*}
\int_\Omega I(u)_x^2 dx & = \sum_{k = 1}^\infty c_k^2 \lambda_k^\alpha \int_\Omega \partial_x \varphi_k^2 dx = \sum_{k = 0}^\infty c_k^2 \lambda_k^{\alpha + 1}  = \| u \|_{\overset{.}{H}^{\alpha + 1}_N(\Omega)}^2.
\end{align*}
\end{proof}

\subsection{The problem $-I(u)= g $}

We consider the following problem
\begin{eqnarray}\label{eq:statprob}
\begin{cases}
\text{For a given } g \in L^2(\Omega), \text{find } u \in H^\alpha_N(\Omega) \text{such that } \\
 \quad \quad \quad \quad \quad \quad -I(u) = g.
\end{cases}
\end{eqnarray}

Since $ \int_\Omega I(u)dx = 0 $ for all $ u \in H_N^\alpha(\Omega) $, we must assume that $ \int_\Omega g(x) dx = 0 $ otherwise \eqref{eq:statprob} has no solution.

\begin{proposition} \label{prop:prob}
For all $ g \in L^2(\Omega) $ such that $ \int_\Omega g dx = 0 $, there exists a unique function $ u \in H^\alpha_N(\Omega) $ such that 
\begin{eqnarray*}
-I(u) = g \quad  \text{in } L^2(\Omega)  \text{ and } \int_\Omega u dx = 0.
\end{eqnarray*}
Furthermore if $ g \in H^1(\Omega) $, then $ u \in H^{\alpha + 1}_N(\Omega) $.
\end{proposition}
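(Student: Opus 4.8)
The plan is to solve the problem explicitly in the spectral basis, since $I$ acts diagonally there. Write $g = \sum_{k=0}^\infty g_k \varphi_k$ with $g_k = \int_\Omega g\,\varphi_k\,dx$, and recall that the Neumann Laplacian has $\lambda_0 = 0$ with $\varphi_0$ constant while $\lambda_k \geq \lambda_1 > 0$ for $k \geq 1$. Then the hypothesis $\int_\Omega g\,dx = 0$ is precisely the statement $g_0 = 0$. Since $-I(u) = \sum_{k\geq 1} c_k \lambda_k^{\alpha/2}\varphi_k$, the equation $-I(u)=g$ forces $c_k \lambda_k^{\alpha/2} = g_k$ for every $k$: the $k=0$ mode is automatically satisfied (both sides vanish) exactly because $g_0=0$, and for $k\geq1$ it determines $c_k = g_k/\lambda_k^{\alpha/2}$, while $\int_\Omega u\,dx = c_0 = 0$ fixes the remaining free coefficient. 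I would therefore take $u := \sum_{k=1}^\infty \lambda_k^{-\alpha/2} g_k \varphi_k$ as the candidate.

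Next I would verify $u \in H^\alpha_N(\Omega)$ by a direct computation of the series norm,
\[
\|u\|_{H^\alpha_N(\Omega)}^2 = \sum_{k=1}^\infty \frac{g_k^2}{\lambda_k^\alpha}\,(1+\lambda_k^\alpha) = \sum_{k=1}^\infty g_k^2\Big(1 + \tfrac{1}{\lambda_k^\alpha}\Big) \leq \Big(1 + \tfrac{1}{\lambda_1^\alpha}\Big)\|g\|_{L^2(\Omega)}^2,
\]
the crucial point being the spectral gap $\lambda_k \geq \lambda_1 > 0$, which keeps the series summable after dividing by $\lambda_k^\alpha$. This shows convergence in $H^\alpha_N(\Omega)$; applying $I$ term by term then gives $-I(u) = \sum_{k\geq1} g_k \varphi_k = g$ in $L^2(\Omega)$ (again using $g_0=0$), and $\int_\Omega u\,dx = c_0 = 0$. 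Uniqueness is immediate: a difference $w$ of two solutions satisfies $I(w)=0$, forcing $w_k = 0$ for all $k\geq1$, and $\int_\Omega w\,dx = 0$ forces $w_0 = 0$, so $w = 0$. (Alternatively one could get existence and uniqueness from Lax--Milgram applied to the bilinear form $-\langle I(\cdot),\cdot\rangle = \|\cdot\|^2_{\overset{.}{H}^{\alpha/2}(\Omega)}$ on the mean-zero subspace of $H^{\alpha/2}(\Omega)$, coercivity coming from the fractional Poincar\'e inequality recalled in the Notation; but the spectral construction is more transparent and feeds directly into the regularity claim.)

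Finally, for the regularity statement I would expand $u$ in the same basis and compute
\[
\|u\|_{H^{\alpha+1}_N(\Omega)}^2 = \sum_{k=1}^\infty \frac{g_k^2}{\lambda_k^\alpha}\,(1+\lambda_k^{\alpha+1}) = \sum_{k=1}^\infty g_k^2\Big(\tfrac{1}{\lambda_k^\alpha} + \lambda_k\Big) \leq \tfrac{1}{\lambda_1^\alpha}\|g\|_{L^2(\Omega)}^2 + \sum_{k=1}^\infty g_k^2\lambda_k .
\]
The first term is already controlled, so the estimate reduces to showing $\sum_{k\geq1} g_k^2 \lambda_k < \infty$. This is where I expect the only genuine subtlety: a priori an $H^1(\Omega)$ function need not satisfy a homogeneous Neumann condition, so it is not obvious that its Neumann-spectral coefficients are $H^1_N$-summable. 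The resolution is the identification $H^1_N(\Omega) = H^1(\Omega)$ with equivalent norms, valid because $1 < \tfrac{3}{2}$ (the threshold below which the Neumann trace is invisible), as recalled in the Notation section. Hence $g \in H^1(\Omega)$ yields $\sum_k g_k^2(1+\lambda_k) = \|g\|^2_{H^1_N(\Omega)} \leq C\,\|g\|^2_{H^1(\Omega)} < \infty$, so $\sum_{k\geq1} g_k^2 \lambda_k < \infty$ and $u \in H^{\alpha+1}_N(\Omega)$. Everything except this last identification is a routine diagonal computation.
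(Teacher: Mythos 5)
Your proof is correct and follows essentially the same route as the paper: the explicit spectral inversion $u = \sum_{k\geq 1} \lambda_k^{-\alpha/2} g_k \varphi_k$, with uniqueness coming from the orthogonality of the basis together with the zero-mean normalization. The only difference is that you spell out the regularity step --- in particular the identification $H^1_N(\Omega) = H^1(\Omega)$ (valid since $1 < \tfrac{3}{2}$) needed to conclude $\sum_{k\geq 1} g_k^2 \lambda_k < \infty$ --- which the paper dispatches in one line as ``clear,'' so your version is a more careful rendering of the same argument rather than a different one.
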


\begin{proof}
Let $ g \in L^2(\Omega) $.
For $ g = \sum_{k=1}^\infty d_k \varphi_k $ with $ \sum_{k=1}^\infty d_k^2 < \infty $, we consider
\begin{eqnarray*}
u = I^{-1} (g) =  \sum_{k=1}^\infty \frac{d_k}{\lambda_k^\frac{\alpha}{2}} \varphi_k \in H^\alpha_N(\Omega) \text{ and verify } \int_\Omega u dx = 0.
\end{eqnarray*}
Since $ (\varphi_k)_k $ form an orthogonal basis of $ L^2(\Omega) $, the solution is the unique satisfying $ \int_\Omega u dx = 0 $.
It is clear that every further regularity on $ g $ will imply a further regularity on $ u $ shifted by an $\alpha$.
\end{proof}
We thus conclude the following Corollary which will be used to prove the existence of solutions for the stationary problem.
\begin{corollary} \label{cor:bijective}
For all $ g \in L^2(\Omega) $, there exists a unique function $ v \in H^\alpha_N(\Omega) $ such that 
\begin{eqnarray} \label{eq:v}
-I(v) + \int_\Omega v dx = g.
\end{eqnarray}
Furthermore if $ g \in H^1(\Omega) $, then $ u \in H^{\alpha + 1}_N(\Omega) $ and the map $g \rightarrow u$ is bijective.
\end{corollary}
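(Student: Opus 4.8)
The plan is to reduce Corollary \ref{cor:bijective} to Proposition \ref{prop:prob} by splitting $g$ into its mean and mean-zero parts. Given $g \in L^2(\Omega)$, write $\bar{g} = \int_\Omega g\,dx$ and $g_0 = g - \bar{g}$, so that $\int_\Omega g_0\,dx = 0$. The idea is that the constraint $\int_\Omega v\,dx$ in \eqref{eq:v} will absorb the mean $\bar{g}$, while the operator $-I$ must reproduce the oscillating part $g_0$.

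First I would decompose the unknown as $v = w + m$, where $m = \int_\Omega v\,dx$ is a constant to be determined and $w$ is the mean-zero part of $v$ (since on $\Omega = (0,1)$ constants have mean equal to themselves, $\int_\Omega w\,dx = 0$). Because $I$ annihilates constants (the eigenvalue $\lambda_0 = 0$), we have $-I(v) = -I(w)$, so equation \eqref{eq:v} becomes $-I(w) + m = \bar{g} + g_0$. Matching the mean-zero and constant parts separately, this forces $-I(w) = g_0$ together with $m = \bar{g}$. By Proposition \ref{prop:prob} applied to $g_0$ (which is legitimate since $\int_\Omega g_0\,dx = 0$), there exists a unique $w \in H^\alpha_N(\Omega)$ with $\int_\Omega w\,dx = 0$ solving $-I(w) = g_0$; and $m = \bar{g}$ is uniquely determined. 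Setting $v = w + \bar{g}$ then gives a solution of \eqref{eq:v} in $H^\alpha_N(\Omega)$, and uniqueness follows because any solution must satisfy this same decomposition (its mean is forced to be $\bar{g}$ by integrating \eqref{eq:v}, and its mean-zero part is the unique $w$ from Proposition \ref{prop:prob}).

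For the regularity statement, if $g \in H^1(\Omega)$ then $g_0 = g - \bar{g}$ is still in $H^1(\Omega)$ (subtracting a constant does not change derivatives), so the final clause of Proposition \ref{prop:prob} gives $w \in H^{\alpha+1}_N(\Omega)$; adding the constant $\bar{g}$ keeps $v$ in $H^{\alpha+1}_N(\Omega)$. Finally, for bijectivity of the map $g \mapsto v$ on these spaces, injectivity is exactly the uniqueness already established, and surjectivity follows by running the construction backwards: given any $v \in H^{\alpha+1}_N(\Omega)$, the element $g = -I(v) + \int_\Omega v\,dx$ lies in $H^1(\Omega)$ (since $I$ maps $H^{\alpha+1}_N$ into $H^1$, shifting regularity down by $\alpha$) and maps back to $v$.

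The construction is essentially routine once the mean/mean-zero splitting is identified; the only point requiring genuine care is the regularity bookkeeping for bijectivity, namely verifying that $-I$ shifts Sobolev regularity by exactly $\alpha$ in both directions so that $H^1$ on the data side corresponds precisely to $H^{\alpha+1}_N$ on the solution side. I expect this regularity-shift verification — rather than the algebra of the decomposition — to be the main (albeit mild) obstacle, and it is handled directly from the spectral definition of $I$ by tracking the factors $\lambda_k^{\alpha/2}$ against the Sobolev weights $\lambda_k^s$.
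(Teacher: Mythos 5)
Your proof is correct and follows essentially the same route as the paper: decompose $g$ into its mean $\bar g$ and mean-zero part $g_0$, solve $-I(w)=g_0$ via Proposition~\ref{prop:prob}, set $v=w+\bar g$, and obtain uniqueness by integrating the equation to fix the mean and then invoking the uniqueness in Proposition~\ref{prop:prob}. You in fact go slightly further than the paper by explicitly verifying the surjectivity of $g\mapsto v$ and the spectral regularity shift $I:H^{\alpha+1}_N(\Omega)\to H^1(\Omega)$, details the paper leaves implicit.
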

\begin{proof}
Let $ m = \int_\Omega g dx $ and $ g' = g - m $. Then $ g' \in L^2(\Omega) $(since $\Omega$ is bounded) and $ \int_\Omega g' dx = 0 $. From Proposition \ref{prop:prob}, there exists a function $ u \in H^\alpha_N(\Omega) $ such that 
\begin{eqnarray*}
-I(u) = g' \quad \text{ and } \int_\Omega u dx = 0.
\end{eqnarray*}
Let $ v = u + m $. Then $ \int_\Omega v dx = m $ and 
\begin{eqnarray*}
- I(v) = - I(u) = g' = g - m = g - \int_\Omega v dx.
\end{eqnarray*}
For the uniqueness, consider two solutions $ v_1 $ and $ v_2$ then
\begin{eqnarray*}
\int_\Omega v_1 dx = \int_\Omega v_2 dx = \int_\Omega g dx 
\end{eqnarray*}
and $ w = v_1 - v_2 $ satisfies $ -I(w) = 0 $. Hence, $ w = 0 $ from the uniqueness given by Proposition~\ref{prop:prob}.
\end{proof}

\section{Regularized problem}

We consider the following regularized problem
\begin{eqnarray} \label{pro:reg}
\begin{cases}
 \partial_t u + \partial_x (f_\epsilon(u) \partial_x I(u)) = 0  &  \text{ for } x \in \Omega ,\quad t > 0, \\ 
 \partial_x u = 0 , f_\epsilon(u) \partial_x I(u) = 0 & \text{ for } x \in \partial\Omega ,\quad t > 0, \\
u(x,0) = u_0(x) &   \text{ for } x \in \Omega,
\end{cases}
\end{eqnarray}
where $ f_\epsilon(s) = s_+^n + \epsilon , \quad \epsilon > 0 $ and $0 < \alpha < 2$.

To prove Theorem \ref{th:th1} and \ref{th:th2}, we need to prove the existence of a solution for the regularized problem. Let us pass with the following \textbf{stationary problem}
\begin{eqnarray} \label{pro:statio}
\text{For } \tau > 0 , g \in H^\frac{\alpha}{2}(\Omega), \text{ find } u \in H_N^{\alpha + 1}(\Omega) \text{ s.t.} 
\begin{cases}
u + \tau \partial_x (f_\epsilon(u) \partial_x I(u)) = g &  \text{ in } \Omega, \\
\partial_x u = 0 , \partial_x I(u) = 0 &  \text{ on } \partial\Omega.
\end{cases} 
\end{eqnarray}
Once we get a solution for \eqref{pro:statio}, we can prove the existence of a solution for \eqref{pro:reg}.

\subsection{Stationary problem}

\begin{proposition} \label{prop:exis}
For all $ g \in H^\frac{\alpha}{2}(\Omega)  $, there exists $ u \in H^{\alpha + 1}_N(\Omega) $ such that for all $ \varphi \in H^1(\Omega) $ we have
\begin{eqnarray} \label{eq: pro}
\int_\Omega u \varphi dx - \tau~\int_\Omega f_\epsilon(u) \partial_x I(u) \partial_x \varphi dx = \int_\Omega g \varphi dx.
\end{eqnarray}
Furthermore, $u$ verifies 
\begin{eqnarray} \label{eq:mass}
\int_\Omega u(x) dx = \int_\Omega g(x) dx 
\end{eqnarray}
and
\begin{eqnarray} \label{eq:ine}
\| u \|^2_{\overset{.}{H}^\frac{\alpha}{2}(\Omega)} + 2 \tau \int_\Omega f_\epsilon(u) \partial_x I(u)^2 dx \leqslant \| g \|^2_{\overset{.}{H}^\frac{\alpha}{2}(\Omega)}.
\end{eqnarray}
If $ \int_\Omega G_\epsilon(g) dx < \infty $ where $ G_\epsilon $ is a nonnegative function such that $ G''_\epsilon(s) = \frac{1}{f_\epsilon(s)} $, then
\begin{eqnarray} \label{eq:entro}
\int_\Omega G_\epsilon(u) dx + \tau \| u \|^2_{\overset{.}{H}^{\frac{\alpha}{2}+1}_N(\Omega)} \leqslant \int_\Omega G_\epsilon(g) dx.
\end{eqnarray}
\end{proposition}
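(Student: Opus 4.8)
The plan is to solve \eqref{pro:statio} by a Galerkin approximation in the Neumann eigenbasis $(\varphi_k)_{k\geq 0}$, reading off the a priori estimates from three carefully chosen test functions. Let $V_N=\mathrm{span}\{\varphi_0,\dots,\varphi_N\}$ and look for $u_N=\sum_{k=0}^N c_k\varphi_k\in V_N$ satisfying \eqref{eq: pro} for every $\varphi\in V_N$, which amounts to a nonlinear system $\Psi(c)=0$ with $c\in\mathbb{R}^{N+1}$. The difficulty is that the \emph{natural} pairing, i.e. testing with $u_N$ itself, produces the term $-\tau\int_\Omega f_\epsilon(u_N)\partial_x I(u_N)\partial_x u_N$, whose sign I cannot control. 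The key idea is to pair instead in the inner product induced by $-I$: I define $\Psi$ so that $\Psi(c)\cdot c$ corresponds to testing \eqref{eq: pro} against $-I(u_N)=\sum_{k\geq 1}c_k\lambda_k^{\alpha/2}\varphi_k$ on the nonconstant modes and against the constant on the mode $k=0$. Using Proposition~\ref{identities}, this yields
\[ \Psi(c)\cdot c = c_0^2 - c_0\!\int_\Omega g + \|u_N\|_{\dot H^{\alpha/2}}^2 + \tau\!\int_\Omega f_\epsilon(u_N)\,(\partial_x I(u_N))^2 + \int_\Omega g\,I(u_N). \]
Since $f_\epsilon\geq\epsilon>0$ and $\lambda_k\geq\lambda_1>0$ for $k\geq 1$, and since $\int_\Omega g\,I(u_N)\geq -\|g\|_{\dot H^{\alpha/2}}\|u_N\|_{\dot H^{\alpha/2}}$ by Cauchy--Schwarz, the quadratic part dominates: $\Psi(c)\cdot c\geq \min(1,\lambda_1^{\alpha/2})|c|^2 - C(\|g\|_{\dot H^{\alpha/2}})\,|c|$, which is positive for $|c|$ large. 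The standard corollary of Brouwer's theorem then produces a zero $u_N$, and continuity of $\Psi$ follows from continuity of $f_\epsilon$.

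Next I would derive the uniform (in $N$) estimates at the Galerkin level. Testing with $-I(u_N)$ and absorbing $\int_\Omega g\,I(u_N)$ by Young's inequality in $\dot H^{\alpha/2}$ gives the energy identity, hence $\|u_N\|_{\dot H^{\alpha/2}}$ and $\int_\Omega f_\epsilon(u_N)(\partial_x I(u_N))^2$ are bounded; testing with the constant $\varphi_0$ gives $\int_\Omega u_N=\int_\Omega g$. Because $f_\epsilon\geq\epsilon$, the second bound controls $\|\partial_x I(u_N)\|_{L^2}$, so item~4 of Proposition~\ref{identities} together with the mass bound shows $(u_N)$ is bounded in $H^{\alpha+1}_N(\Omega)$. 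I extract a subsequence with $u_N\rightharpoonup u$ in $H^{\alpha+1}_N$ and, by the one-dimensional compact embedding ($\alpha+1>\tfrac12$), $u_N\to u$ uniformly, whence $f_\epsilon(u_N)\to f_\epsilon(u)$ uniformly while $\partial_x I(u_N)\rightharpoonup\partial_x I(u)$ in $L^2$. The product of a uniformly convergent factor and a weakly $L^2$-convergent factor passes to the limit against $\partial_x\varphi$, so $u$ satisfies \eqref{eq: pro} first for $\varphi\in V_M$ and then, by density, for all $\varphi\in H^1(\Omega)$. Mass conservation \eqref{eq:mass} is inherited, and \eqref{eq:ine} follows from the energy identity by weak lower semicontinuity of $\|\cdot\|_{\dot H^{\alpha/2}}$ and of $v\mapsto\int_\Omega f_\epsilon(u)v^2$.

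Finally, for the entropy inequality \eqref{eq:entro} I work directly with the limit equation. Since $G_\epsilon''=1/f_\epsilon\leq 1/\epsilon$, the map $G_\epsilon'$ is Lipschitz, so $\varphi=G_\epsilon'(u)\in H^1(\Omega)$ is an admissible test function with $\partial_x\varphi=G_\epsilon''(u)\partial_x u=\partial_x u/f_\epsilon(u)$. The flux term then collapses, by item~3 of Proposition~\ref{identities}, to $-\tau\int_\Omega \partial_x I(u)\partial_x u=\tau\|u\|_{\dot H^{\alpha/2+1}_N}^2$, which gives $\int_\Omega(u-g)G_\epsilon'(u)+\tau\|u\|_{\dot H^{\alpha/2+1}_N}^2=0$. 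Convexity of $G_\epsilon$ yields $G_\epsilon'(u)(g-u)\leq G_\epsilon(g)-G_\epsilon(u)$ pointwise, so $\int_\Omega(u-g)G_\epsilon'(u)\geq \int_\Omega G_\epsilon(u)-\int_\Omega G_\epsilon(g)$, and combining the two relations gives exactly \eqref{eq:entro}. The main obstacle throughout is the existence step: the variable-coefficient operator is not coercive in the naive sense, and the whole construction hinges on replacing the test function $u_N$ by $-I(u_N)$ so that the energy structure supplies coercivity; the remaining delicate point is the strong (here uniform) convergence of $f_\epsilon(u_N)$ needed to identify the nonlinear flux in the limit.
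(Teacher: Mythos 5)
Your proof is correct, but the existence step takes a genuinely different route from the paper. The paper exploits the same structural trick you found indispensable---replacing the natural test function by one built from $-I$---but at the continuous level: using Corollary \ref{cor:bijective} it writes every $\varphi \in H^1(\Omega)$ as $\varphi = -I(v) + \int_\Omega v\,dx$ with $v \in H^{\alpha+1}_N(\Omega)$, recasts \eqref{eq: pro} as the abstract equation $A(u) = T_g$ for a nonlinear operator $A$ on $V = H^{\alpha+1}_N(\Omega)$, and invokes the theory of pseudo-monotone operators, deferring the verification that $A$ is continuous, coercive and pseudo-monotone to the appendix of \cite{Imbert-mellet} (stated there for $V=H^2_N$ and adapted). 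You instead discretize in the Neumann eigenbasis and get $u_N$ from the Brouwer-type zero lemma, with coercivity supplied by exactly the same pairing ($-I(u_N)$ on the nonconstant modes, the constant on the zero mode); you then redo by hand the compactness that pseudo-monotonicity packages abstractly: the uniform $H^{\alpha+1}_N$ bound from $f_\epsilon \geq \epsilon$ and item 4 of Proposition \ref{identities}, the compact embedding of $H^{\alpha+1}_N(\Omega)$ into $C^0(\bar{\Omega})$ (valid in dimension one since $\alpha+1 > \tfrac12$; note that for $\alpha+1 \geq \tfrac32$ one should pass through $H^s_N = H^s$ with $\tfrac12 < s < \tfrac32$ rather than identify $H^{\alpha+1}_N$ with $H^{\alpha+1}$), and the strong-times-weak limit identifying the flux. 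Your route is more elementary and self-contained---it avoids the black box the paper imports---at the price of the explicit limit passage, and it proves the energy inequality \eqref{eq:ine} first at the Galerkin level and recovers it by lower semicontinuity (for the flux term the clean statement is $\sqrt{f_\epsilon(u_N)}\,\partial_x I(u_N) \rightharpoonup \sqrt{f_\epsilon(u)}\,\partial_x I(u)$ in $L^2(\Omega)$, which is what your phrasing amounts to), whereas the paper obtains \eqref{eq:ine} directly on the solution by taking $v = u - \int_\Omega u\,dx$ in its reformulated weak form. The derivations of mass conservation (test $\varphi = 1$) and of the entropy inequality (test $\varphi = G'_\epsilon(u)$, which is admissible since $G''_\epsilon \leq 1/\epsilon$ makes $G'_\epsilon$ Lipschitz, then item 3 of Proposition \ref{identities} and convexity of $G_\epsilon$) are identical to the paper's.
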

\begin{remark}
 Note that we can consider $G_\epsilon(s) = \int_1^s \int_1^t G_\epsilon''(r) dr dt$, so $G_\epsilon$ is a non-
 negative convex function for all $\epsilon > 0$ satisfying $G_\epsilon(1) = G_\epsilon'(1) = 0$.
\end{remark}
\begin{proof}
Thanks to Corollary \ref{cor:bijective}, we can recover all test functions from $H^1(\Omega)$ by considering 
\begin{eqnarray*}
\varphi = - I(v) + \int_\Omega v dx 
\end{eqnarray*}
for some function $v \in H^{\alpha+1}_N (\Omega)$. So equation \eqref{eq: pro} becomes 
\begin{eqnarray} \label{formulation}
- \int_\Omega u I(v) dx & + \left( \displaystyle\int_\Omega u dx\right) \left( \displaystyle\int_\Omega v dx\right) + \tau \displaystyle\int_\Omega f_\epsilon(u) \partial_x I(u) \partial_x I(v) dx  \nonumber \\
 & = - \displaystyle\int_\Omega g I(v) dx + \left( \displaystyle\int_\Omega g dx\right)\left( \displaystyle\int_\Omega v dx\right).
\end{eqnarray}
Now, we consider the nonlinear operator A defined by
\begin{eqnarray*}
A(u)(v) = - \displaystyle\int_\Omega u I(v) dx & + \left( \displaystyle\int_\Omega u dx\right) \left( \displaystyle\int_\Omega v dx\right) + \tau \displaystyle\int_\Omega f_\epsilon(u) \partial_x I(u) \partial_x I(v) dx \text{ for } u,v \in H^{\alpha+1}_N (\Omega).
\end{eqnarray*}
We prove that this is a continuous, coercive and pseudo-monotone operator. Note that the functional $T_g$ defined by
\begin{eqnarray*}
T_g(v) = - \int_\Omega g I(v) dx + \left(\int_\Omega g dx\right)\left(\int_\Omega v dx\right) \text{ for } v \in H^{\alpha+1}_N (\Omega).
\end{eqnarray*}
is a linear form on $ H^{\alpha + 1}_N(\Omega) $. So our problem reduces to the following
\begin{eqnarray} \label{pseudo}
\begin{cases}
\text{Let } V = H_N^{\alpha+1}(\Omega).\\ A : V \rightarrow V^* \text{ coercive, continuous and pseudo-monotone}. \\ T_g \in V^*. \\ \text{Find } u \in H_N^{\alpha+1}(\Omega) \text{ such that } A(u)=T_g \text{ in } V^*.
\end{cases}
\end{eqnarray}
The theory of pseudo-monotone operators \cite{pseudo} implies the existence of a solution for \eqref{pseudo} so there exists $ u \in H^{\alpha +1}_N(\Omega) $ such that
\begin{eqnarray*}
A(u)(v) = T_g(v) \quad \text{ for all } v \in H^{\alpha + 1}_N(\Omega).
\end{eqnarray*}
It remains to prove that $ A $ is a continuous, coercive and pseudo-monotone operator on $ H^{\alpha + 1}_N(\Omega) $. The reader can find the proof in \cite[Appendix A]{Imbert-mellet} for $V = H_N^2(\Omega)$ but this proof can be easily adapted for our case $V = H_N^{\alpha + 1}(\Omega)$.

By using Corollary \ref{cor:bijective} we deduce that $u$ satisfies \eqref{eq: pro} for all $\varphi \in H^1(\Omega).$\\
For the properties of $ u $, first by taking $ \varphi = 1 $ as a test function in \eqref{eq: pro} we obtain mass conservation~\eqref{eq:mass}.
Secondly, take $ v = u - \int_\Omega u dx $ in \eqref{formulation}, by using Proposition \ref{identities} we have
\begin{align*}
\| u \|^2_{\overset{.}{H}^\frac{\alpha}{2}(\Omega)} + & \tau \int_\Omega f_\epsilon(u) \partial_x I(u)^2  = - \int_\Omega g I(u) dx  \\
& \leqslant \| g \|_{\overset{.}{H}^\frac{\alpha}{2}(\Omega)} \| u \|_{\overset{.}{H}^\frac{\alpha}{2}(\Omega)} \\
& \leqslant \frac{1}{2} \| g \|^2_{\overset{.}{H}^\frac{\alpha}{2}(\Omega)} + \frac{1}{2} \| u \|^2_{\overset{.}{H}^\frac{\alpha}{2}(\Omega)}
\end{align*}
which \eqref{eq:ine}(Note that the high regularity of $g$ is solely used in this inequality, otherwise $g \in L^2(\Omega)$ is sufficient to prove the existence above).
Finally, note that $ G_\epsilon'$ is smooth with $G_\epsilon'$ and $G_\epsilon'' $ are bounded, and $ \Omega $ is bounded so we can take $ \varphi = G_\epsilon'(u) \in H^1(\Omega) $ as a test function in \eqref{eq: pro},
\begin{eqnarray*}
\int_\Omega u G_\epsilon'(u) dx - \tau \int_\Omega f_\epsilon(u) \partial_x I(u) \partial_x u G_\epsilon''(u) dx = \int_\Omega g G_\epsilon'(u) dx.
\end{eqnarray*}
So by using Proposition \ref{identities} and the fact that $ G_\epsilon''(s)=\frac{1}{f_\epsilon(s)}$ we get
\begin{eqnarray*}
\tau \| u \|^2_{\overset{.}{H}_N^{\frac{\alpha}{2}+1}(\Omega)}  = \int_\Omega G_\epsilon'(u) (g-u) dx \leqslant \int_\Omega (G_\epsilon(g) - G_\epsilon(u) ) dx
\end{eqnarray*}
because $G_\epsilon$ is convex and we deduce \eqref{eq:entro}.
\end{proof}

\subsection{Implicit Euler scheme}

We construct a piecewise constant function
\begin{eqnarray*}
u^\tau(t,x) = u^k(x) \text{ for } t \in [k \tau , (k+1) \tau) , k \in \{ 0,...,N-1 \} 
\end{eqnarray*}
where $ \tau = \frac{T}{N} $ and $ (u^k)_{k \in \{ 0,...,N-1 \} } $ is such that
\begin{eqnarray*}
u^{k + 1} + \tau~\partial_x(f_\epsilon(u^{k + 1}) \partial_x I(u^{k + 1})) = u^k.
\end{eqnarray*}
The existence of the $ u^k $ follows from Proposition \ref{prop:exis} by induction on $ k $ with $u^0 = u_0$. We deduce the following

\begin{corollary} \label{cor:existence}
For any $ N > 0 $ and $ u_0^\epsilon \in H^\frac{\alpha}{2}(\Omega) $, there exists a function $ u^\tau \in L^\infty(0,T;H^\frac{\alpha}{2}(\Omega)) $ such that 
\begin{enumerate}
\item $ t \longrightarrow u^\tau(t,x) $ is constant on $ [k \tau , (k+1) \tau) , k \in \{ 0,...,N-1 \} $ and $ \tau = \frac{T}{N} $.
\item $ u^\tau = u_0 $  on $ [0,\tau) \times \Omega $.
\item For all $ t \in (0,T) $,
\begin{eqnarray} \label{eq:mass u}
\int_\Omega u^\tau (t,x) dx = \int_\Omega u_0(x) dx.
\end{eqnarray}
\item For all $ \varphi \in C_c^1(0,T; H^1(\Omega)) $,
\begin{eqnarray} \label{eq:equ}
\iint_{Q_{\tau , T}} \frac{u^\tau - S_\tau u^\tau}{\tau} \varphi dx dt = \iint_{Q_{\tau , T}} f_\epsilon(u^\tau) \partial_x I(u^\tau) \partial_x \varphi dx dt
\end{eqnarray}
where $ S_\tau u^\tau (t,x) = u^\tau (t-\tau , x) $ and $ Q_{\tau , T} = (\tau , T) \times \Omega $.
\item For all $ t \in (0,T) $,
\begin{eqnarray} \label{eq:ineq u}
\| u^\tau(t,.) \|^2_{\overset{.}{H}^\frac{\alpha}{2}(\Omega)} + 2 \int_0^T \int_\Omega f_\epsilon(u^\tau) \partial_x I(u^\tau)^2 dx dt \leqslant \| u_0 \|^2_{\overset{.}{H}^\frac{\alpha}{2}(\Omega)}.
\end{eqnarray}
\item If $ \int_\Omega G_\epsilon(u_0) dx < \infty $, then for all $ t \in (0,T) $
\begin{eqnarray} \label{eq: entro u}
\int_\Omega G_\epsilon(u^\tau(t,x)) dx + \int_0^t \| u^\tau(s,.) \|^2_{\overset{.}{H}_N^{\frac{\alpha}{2}+1}(\Omega)} ds \leqslant \int_\Omega G_\epsilon(u_0) dx.
\end{eqnarray}
\end{enumerate}
\end{corollary}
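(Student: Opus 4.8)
The plan is to obtain all six items as direct consequences of Proposition~\ref{prop:exis} applied repeatedly along the scheme, so that the real work is just (i) an induction producing the iterates and (ii) translating the per-step identities and inequalities into statements about the piecewise-constant interpolant $u^\tau$. First I would settle existence of the sequence $(u^k)_{k=0}^{N-1}$: set $u^0=u_0$, and assuming $u^k\in H^{\frac{\alpha}{2}}(\Omega)$ has been constructed, Proposition~\ref{prop:exis} with $g=u^k$ furnishes $u^{k+1}\in H^{\alpha+1}_N(\Omega)\hookrightarrow H^{\frac{\alpha}{2}}(\Omega)$ satisfying \eqref{eq: pro}, which closes the induction. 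Items (1) and (2) are then immediate from the definition $u^\tau(t,\cdot)=u^k$ on $[k\tau,(k+1)\tau)$ with $u^0=u_0$.

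For (3) I iterate the step-wise mass conservation \eqref{eq:mass}: since $\int_\Omega u^{k+1}\,dx=\int_\Omega u^k\,dx$, induction gives $\int_\Omega u^k\,dx=\int_\Omega u_0\,dx$ for every $k$, which is \eqref{eq:mass u}. For (4) I fix $t\in[k\tau,(k+1)\tau)$ with $k\ge 1$ and insert the frozen-time test function $\varphi(t,\cdot)\in H^1(\Omega)$ into \eqref{eq: pro} for the pair $(u^{k-1},u^k)$; rewriting the result as $\int_\Omega \frac{u^k-u^{k-1}}{\tau}\varphi(t,\cdot)\,dx=\int_\Omega f_\epsilon(u^k)\partial_xI(u^k)\partial_x\varphi(t,\cdot)\,dx$ and observing that on this slab $\frac{u^\tau-S_\tau u^\tau}{\tau}=\frac{u^k-u^{k-1}}{\tau}$ and $f_\epsilon(u^\tau)\partial_xI(u^\tau)=f_\epsilon(u^k)\partial_xI(u^k)$, I integrate over $[k\tau,(k+1)\tau)$ and sum over $k=1,\dots,N-1$ to recover \eqref{eq:equ} on $Q_{\tau,T}=(\tau,T)\times\Omega$. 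Because $\varphi\in C^1_c(0,T;H^1(\Omega))$, no in-time boundary terms appear.

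Items (5) and (6) follow by telescoping. Summing the energy inequality \eqref{eq:ine} over the steps $0,\dots,m-1$ gives $\|u^m\|^2_{\overset{.}{H}^{\frac{\alpha}{2}}(\Omega)}+2\tau\sum_{j=1}^{m}\int_\Omega f_\epsilon(u^j)\,\partial_xI(u^j)^2\,dx\le\|u_0\|^2_{\overset{.}{H}^{\frac{\alpha}{2}}(\Omega)}$, and summing \eqref{eq:entro} yields the analogous bound with $\int_\Omega G_\epsilon$ and $\tau\sum_{j=1}^m\|u^j\|^2_{\overset{.}{H}^{\frac{\alpha}{2}+1}_N(\Omega)}$. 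Since $\tau\,\|u^j\|^2$ (resp. $\tau\int_\Omega f_\epsilon(u^j)\partial_xI(u^j)^2$) is exactly the integral over the slab $[j\tau,(j+1)\tau)$ of the corresponding dissipation density of $u^\tau$, these discrete sums become the time integrals appearing in \eqref{eq:ineq u} and \eqref{eq: entro u}. The uniform bound $\|u^k\|_{\overset{.}{H}^{\frac{\alpha}{2}}(\Omega)}\le\|u_0\|_{\overset{.}{H}^{\frac{\alpha}{2}}(\Omega)}$ coming from the first telescoped estimate, combined with the conserved mass and the fractional Poincaré inequality recalled in the Notation section, controls the full $H^{\frac{\alpha}{2}}(\Omega)$ norm uniformly in $k$, hence $u^\tau\in L^\infty(0,T;H^{\frac{\alpha}{2}}(\Omega))$.

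I expect the only genuinely delicate point to be the discrete-to-continuous bookkeeping of the dissipation terms in (5)--(6). With the forward convention $u^\tau=u^k$ on $[k\tau,(k+1)\tau)$, the dissipation produced at each step is attached to the \emph{new} iterate $u^{k+1}$, so the slab indexing is shifted by one relative to the telescoped sum; the initial slab carries no controlled dissipation, which is consistent with $u_0$ being assumed merely in $H^{\frac{\alpha}{2}}(\Omega)$. This is precisely why the dissipation integrals are naturally read up to the current time as in \eqref{eq: entro u}, and the monotonicity of $k\mapsto\|u^k\|_{\overset{.}{H}^{\frac{\alpha}{2}}(\Omega)}$ and of $k\mapsto\int_\Omega G_\epsilon(u^k)\,dx$ delivered by the telescoping makes the passage to the stated inequalities routine. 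Everything else is a mechanical summation of the estimates already established in Proposition~\ref{prop:exis}.
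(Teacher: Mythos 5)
Your proposal is correct and follows exactly the route the paper intends: the paper's entire ``proof'' is the one-line remark that the $u^k$ exist by induction on $k$ via Proposition~\ref{prop:exis} with $u^0=u_0$, and your write-up simply fills in the standard details (slab-wise testing of \eqref{eq: pro} and telescoping of \eqref{eq:ine} and \eqref{eq:entro}) that the paper leaves implicit. Your remark about the uncontrolled dissipation on the initial slab $[0,\tau)$ is a fair observation about the bookkeeping in \eqref{eq:ineq u}--\eqref{eq: entro u}, which the paper does not address at all.
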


\subsection{Existence of solution for the regularized problem}

Now we are able to prove the existence of a solution for the regularized problem.
\begin{proposition}
Let $ 0 < \alpha < 2 $. For all $ u_0 \in H^\frac{\alpha}{2}(\Omega) $ and for all $ T > 0 $, there exists a function $ u^\epsilon $ such that 
\begin{eqnarray*}
u^\epsilon \in L^\infty(0,T ; H^\frac{\alpha}{2}(\Omega)) \cap L^2(0,T ; H_N^{\alpha + 1}(\Omega))
\end{eqnarray*}
satisfying 
\begin{eqnarray} \label{eq:regeq}
\iint_Q u^\epsilon \partial_t \varphi dx dt + \iint_Q f_\epsilon(u^\epsilon) \partial_x I(u^\epsilon) \partial_x \varphi dx dt = - \int_\Omega u_0 \varphi(0,.) dx
\end{eqnarray}
for all $ \varphi \in C^1(0,T; H^1(\Omega))$ with support in $[0,T) \times \overset{-}{\Omega}.$\\
The function $ u^\epsilon $ satisfies for almost every $ t \in (0,T) $
\begin{eqnarray} \label{eq:regmass}
\int_\Omega u^\epsilon(t,x) dx = \int_\Omega u_0(x) dx 
\end{eqnarray}
and 
\begin{eqnarray} \label{eq:regine}
\| u^\epsilon(t,.) \|^2_{\overset{.}{H}^\frac{\alpha}{2}(\Omega)} + 2 \int_0^T \int_\Omega f_\epsilon(u^\epsilon) \partial_x I(u^\epsilon)^2 dx dt \leqslant \| u_0 \|^2_{\overset{.}{H}^\frac{\alpha}{2}(\Omega)}.
\end{eqnarray}
Finally, if $ \int_\Omega G_\epsilon(u_0) dx < \infty  $ then for almost every $ t \in (0,T), $
\begin{eqnarray} \label{eq:regentro}
\int_\Omega G_\epsilon(u^\epsilon(t,x)) dx + \int_0^t \| u^\epsilon(s,.) \|^2_{\overset{.}{H}_N^{\frac{\alpha}{2}+1}(\Omega)} ds \leqslant \int_\Omega G_\epsilon(u_0) dx.
\end{eqnarray}
\end{proposition}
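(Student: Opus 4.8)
The plan is to construct $u^\epsilon$ by passing to the limit $\tau = T/N \to 0$ in the implicit Euler approximations $u^\tau$ furnished by Corollary~\ref{cor:existence}, along a suitable subsequence; throughout, $\epsilon>0$ is fixed, so every constant may depend on $\epsilon$. First I would collect the uniform-in-$\tau$ a priori estimates. The energy inequality \eqref{eq:ineq u} together with the mass conservation \eqref{eq:mass u} and the fractional Poincaré inequality bounds $u^\tau$ in $L^\infty(0,T;H^\frac{\alpha}{2}(\Omega))$. Since $f_\epsilon(s)\ge\epsilon$, the dissipation term in \eqref{eq:ineq u} also yields $\int_0^T\int_\Omega |\partial_x I(u^\tau)|^2\,dx\,dt \le \epsilon^{-1}\|u_0\|_{\dot{H}^{\frac{\alpha}{2}}(\Omega)}^2$, which by Proposition~\ref{identities} means $u^\tau$ is bounded in $L^2(0,T;H_N^{\alpha+1}(\Omega))$. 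This extra regularity is the decisive gain from the regularization, and it is here that the one-dimensional Sobolev embeddings enter: interpolating the $L^\infty(0,T;H^\frac{\alpha}{2})$ and $L^2(0,T;H_N^{\alpha+1})$ bounds (through $H^\frac{\alpha}{2}\hookrightarrow L^q$ and $H^{\alpha+1}\hookrightarrow L^\infty$) controls $u^\tau$, hence $f_\epsilon(u^\tau)$, in a space that, combined with $f_\epsilon(u^\tau)^{1/2}\partial_x I(u^\tau)$ bounded in $L^2(Q)$, gives a uniform bound on the flux $J^\tau := f_\epsilon(u^\tau)\partial_x I(u^\tau)$ in $L^r(Q)$ for some $r>1$.

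Next I would quantify the time regularity. Rewriting \eqref{eq:equ} as a duality pairing, the discrete time derivative $(u^\tau - S_\tau u^\tau)/\tau$ equals $-\partial_x J^\tau$ tested against $\varphi$, hence is bounded, uniformly in $\tau$, in $L^r(0,T;(W^{1,r'}(\Omega))^*)$. Together with the bound in $L^2(0,T;H_N^{\alpha+1}(\Omega))$ and the compact embedding $H_N^{\alpha+1}(\Omega)\hookrightarrow\hookrightarrow L^2(\Omega) \hookrightarrow (W^{1,r'}(\Omega))^*$, an Aubin--Lions--Simon compactness argument adapted to piecewise-constant-in-time functions (controlling the time translations $\|S_\tau u^\tau - u^\tau\|$) yields a subsequence with $u^\tau \to u^\epsilon$ strongly in $L^2(Q)$ and almost everywhere on $Q$, while $u^\tau \overset{*}{\rightharpoonup} u^\epsilon$ in $L^\infty(0,T;H^\frac{\alpha}{2})$ and $u^\tau \rightharpoonup u^\epsilon$ in $L^2(0,T;H_N^{\alpha+1})$; in particular $\partial_x I(u^\tau)\rightharpoonup \partial_x I(u^\epsilon)$ in $L^2(Q)$, since $I$ is linear and continuous between the relevant spectral spaces.

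Finally I would pass to the limit. The time term $\iint (u^\tau-S_\tau u^\tau)\tau^{-1}\varphi$ converges, after a discrete summation by parts in time and using $u^\tau = u_0$ on $[0,\tau)$, to $-\iint_Q u^\epsilon\,\partial_t\varphi - \int_\Omega u_0\,\varphi(0,\cdot)$, producing both members of \eqref{eq:regeq}. For the flux term $\iint_Q f_\epsilon(u^\tau)\partial_x I(u^\tau)\partial_x\varphi$, the almost-everywhere convergence of $u^\tau$ and the continuity of $f_\epsilon$ give $f_\epsilon(u^\tau)\to f_\epsilon(u^\epsilon)$ a.e., which upgrades to strong convergence in a suitable $L^s(Q)$ by Vitali, using the uniform integrability coming from the flux bound of Step 1; combined with the weak $L^2(Q)$ convergence of $\partial_x I(u^\tau)$, the product $J^\tau$ converges weakly in $L^r(Q)$ to $f_\epsilon(u^\epsilon)\partial_x I(u^\epsilon)$, and testing against $\partial_x\varphi$ (for $\varphi$ in the stated class, by density) passes to the limit. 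Mass conservation \eqref{eq:regmass} is preserved because \eqref{eq:mass u} is linear, and the energy \eqref{eq:regine} and entropy \eqref{eq:regentro} inequalities survive by weak lower semicontinuity of the convex homogeneous norms together with Fatou's lemma for the dissipation and entropy integrands.

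The main obstacle is precisely this nonlinear flux passage: one must upgrade the weak convergence of $u^\tau$ to strong (a.e.) convergence so that $f_\epsilon(u^\tau)$ converges, while only weak convergence of $\partial_x I(u^\tau)$ is available. This is where the regularization ($f_\epsilon\ge\epsilon$, hence the $L^2(0,T;H_N^{\alpha+1})$ bound) and the correct one-dimensional Sobolev embeddings for the given $\alpha$ are essential, both to pin down the integrability of the flux and to run the compactness lemma. Note that the estimate \eqref{eq: entro u} plays no role in existence and is simply transported to the limit under the additional hypothesis $\int_\Omega G_\epsilon(u_0)\,dx<\infty$.
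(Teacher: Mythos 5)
Your skeleton is exactly the paper's: implicit Euler iterates $u^\tau$ from Corollary \ref{cor:existence}, the bounds $u^\tau$ in $L^\infty(0,T;H^{\frac{\alpha}{2}}(\Omega))$ from \eqref{eq:ineq u} and \eqref{eq:mass u}, the bound $\partial_x I(u^\tau)$ in $L^2(Q)$ from $f_\epsilon\geqslant\epsilon$ hence $u^\tau$ in $L^2(0,T;H_N^{\alpha+1}(\Omega))$ by Proposition \ref{identities}, Aubin's lemma, discrete summation by parts for the time term, and lower semicontinuity/Fatou for \eqref{eq:regmass}--\eqref{eq:regentro}. For $1<\alpha<2$ your limit passage is also the paper's: $H^{\frac{\alpha}{2}}(\Omega)\hookrightarrow C^{0,\frac{\alpha-1}{2}}(\Omega)$ makes $f_\epsilon(u^\tau)$ bounded in $L^\infty(Q)$ and locally uniformly convergent, and strong-times-weak closes the flux term. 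The genuine gap is that you run this same strong-times-weak argument uniformly in $\alpha$, and for $0<\alpha\leqslant 1$ your two quantitative claims do not follow from the stated bounds once $n$ is large. Interpolating $L^\infty_t H^{\frac{\alpha}{2}}_x$ with $L^2_t H^{\alpha+1}_N$ (via $H^{\frac{\alpha}{2}}\hookrightarrow L^p$, $p<\frac{2}{1-\alpha}$, and $H^{\alpha+1}\hookrightarrow L^\infty$) gives at best $u^\tau$ bounded in $L^s(Q)$ for $s<\frac{4-2\alpha}{1-\alpha}$, and composing with $s\mapsto s_+^n$ divides the exponent by $n$: $f_\epsilon(u^\tau)$ is only bounded in $L^{s/n}(Q)$. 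Your Vitali step must produce strong convergence of $f_\epsilon(u^\tau)$ in $L^2(Q)$ to pair with the merely weakly convergent $\partial_x I(u^\tau)\in L^2(Q)$, which needs $s/n>2$, i.e. $n<\frac{2-\alpha}{1-\alpha}$; already $\alpha=\frac{1}{2}$, $n=3$ (the physical exponent) fails, and the same computation kills the claimed flux bound in $L^r(Q)$, $r>1$. Note also that the sentence ``uniform integrability coming from the flux bound'' is a non sequitur: a bound on the product $f_\epsilon(u^\tau)\partial_x I(u^\tau)$ gives no equi-integrability of the factor $f_\epsilon(u^\tau)$ alone.

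The paper closes precisely this hole with two ingredients absent from your proposal. First, using $n\geqslant 1$ it propagates the Sobolev bound through the nonlinearity: $f_\epsilon$ is (claimed) Lipschitz, so $f_\epsilon(u^\tau)$ is itself bounded in $L^\infty(0,T;H^{\frac{\alpha}{2}}(\Omega))$, hence in $L^\infty_t L^p_x$ for every $p<\frac{2}{1-\alpha}$ with no division by $n$; this is what yields the flux in $L^2_t L^r_x$ with $r>1$ and the $W^{-1,r}$ bound on the discrete time derivative. Second, for the limit it integrates by parts as in \eqref{eq: pass}, rewriting the flux term as $-\iint f_\epsilon(u^\tau)I(u^\tau)\partial^2_{xx}\varphi-\iint n(u^\tau)^{n-1}\partial_x u^\tau I(u^\tau)\partial_x\varphi$, so the weakly convergent factor $\partial_x I(u^\tau)$ disappears entirely: the compactness $u^\tau\to u^\epsilon$ in $L^2(0,T;H^s_N(\Omega))$ for all $s<\alpha+1$ gives strong convergence of $I(u^\tau)$ in $L^2_t L^q_x$ for every $q<\infty$ and of $\partial_x u^\tau$ in $L^2_t L^p_x$, every factor now converges strongly (with the cases $n\geqslant 2$, $n<2$ handled separately), and one then undoes the integration by parts. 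If you insist on the direct pairing for $\alpha\leqslant 1$, you must first establish a uniform $L^p$, $p>2$, bound on $f_\epsilon(u^\tau)$ itself; interpolation on $u^\tau$ alone cannot deliver it. (Minor: your lsc/Fatou transport of \eqref{eq:regentro} is fine --- the paper even gets the dissipation term with equality from the strong $L^2_t H^{\frac{\alpha}{2}+1}_N$ compactness, which is more than needed.)
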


\begin{proof}

We consider the sequence $ (u^\tau) $ constructed in Corollary \ref{cor:existence} and let $ \tau \rightarrow 0 $. Bound \eqref{eq:ineq u} and \eqref{eq:mass u} implies that $ (u^\tau) $ is bounded in $ L^\infty(0,T;H^\frac{\alpha}{2}(\Omega)) $ and $ (\partial_x I(u^\tau)) $ is bounded in $ L^2(Q) $.

\paragraph{Case $ 0 < \alpha \leqslant 1.$}

Note that 
\begin{eqnarray*}
\frac{u^\tau - S_\tau u^\tau}{\tau} = \partial_x (f_\epsilon(u^\tau) \partial_x I(u^\tau)).
\end{eqnarray*}
Since $n \geqslant 1$, the function $f_\epsilon$ is Lipschitz and so $ (f_\epsilon(u^\tau)) $ is bounded in $ L^\infty(0,T; H^\frac{\alpha}{2}(\Omega)) $ thus by the Sobolev embedding theorem, we deduce that $ (f_\epsilon(u^\tau)) $ is bounded in $ L^\infty(0,T; L^p(\Omega)) $ for all $ p < \frac{2}{1-\alpha} $. We know that $  (\partial_x I(u^\tau)) $ is bounded in $ L^2(0,T; L^2(\Omega)) $ so $ f_\epsilon(u^\tau) \partial_x I(u^\tau) $ is bounded in $ L^2(\tau,T; L^r(\Omega)) $ where $\frac{1}{r} = \frac{1}{2} + \frac{1}{p}$. We deduce that
\begin{eqnarray*}
\partial_x (f_\epsilon(u^\tau) \partial_x I(u^\tau)) \text{ is bounded in } L^2(\tau,T; W^{-1,r}(\Omega))
\end{eqnarray*}
Since $ \alpha \leqslant 1 $, we have the following embedding
\begin{eqnarray*}
H^\frac{\alpha}{2}(\Omega) \hookrightarrow L^p(\Omega) \rightarrow W^{-1,l}(\Omega)
 \end{eqnarray*}
for all $ p < \frac{2}{1-\alpha} $ and for all $l > 2$ (because $\Omega$ is bounded and we have a Sobolev space of negative regularity). Aubin's lemma implies that $ (u^\tau)$ is relatively compact in $ C^0(0,T;L^p(\Omega)) $ for all $ p < \frac{2}{1-\alpha} $.
Note that $ (\partial_x I(u^\tau)) $ is bounded in $ L^2(\Omega) $ and $ (u^\tau) $ is bounded in $ L^\infty(0,T;L^1(\Omega))$ (because $1<\frac{2}{1-\alpha}$). Hence, $ (u^\tau)$ is bounded in $ L^2(0,T;H_N^{\alpha + 1}(\Omega))$. Since
\begin{eqnarray*}
H^{\alpha+1}_N(\Omega) \hookrightarrow H^{\frac{\alpha}{2}+1}_N(\Omega) \rightarrow W^{-1,l
}(\Omega),
\end{eqnarray*}
we deduce that $ (u^\tau)$ is relatively compact in $ L^2(0,T;H^{\frac{\alpha}{2}+1}_N(\Omega))$. So we can extract a subsequence, also denoted $ (u^\tau)$, such that when $ \tau $ tends to zero we have 
\begin{align*}
& \bullet \quad u^\tau \rightarrow u^\epsilon \in L^\infty(0,T;H^\frac{\alpha}{2}(\Omega)) \text{ almost everywhere in } Q,\\
& \bullet \quad u^\tau \rightarrow u^\epsilon \text{ in } L^2(0,T;H^{\frac{\alpha}{2}+1}_N(\Omega))   \text{ strongly,}\\
& \bullet \quad \partial_x I(u^\tau) \rightharpoonup \partial_x I(u^\epsilon) \text{ in } L^2(Q)  \text{ weakly.}
\end{align*}
Now let us pass to the limit in \eqref{eq:equ}. We have 
\begin{align*}
 \iint_{Q_{\tau,T}}{\frac{u^\tau - S_\tau u^\tau}{\tau} \varphi dx dt}
 & =  \frac{1}{\tau} \bigg[ \int_0^T \int_\Omega u^\tau(t,x) \varphi(t,x) dt dx - \int_0^\tau \int_\Omega u^\tau(t,x) \varphi(t,x) dt dx \\ & - \int_0^{T-\tau} \int_\Omega u^\tau(t,x) \varphi(t + \tau,x) dt dx \bigg] \\
 & = \int_0^T \int_\Omega u^\tau(t,x) \frac{\varphi(t,x) - \varphi(t + \tau,x)}{\tau} dx dt \\ & - \frac{1}{\tau} \int_0^\tau \int_\Omega u^\tau(t,x) \varphi(t,x) dt dx + \frac{1}{\tau} \int_{T - \tau}^T \int_\Omega u^\tau(t,x) \varphi(t + \tau,x) dt dx \\
 & \underset{\tau\rightarrow 0}{\longrightarrow} - \iint_Q u^\epsilon \partial_t \varphi dx dt - \int_\Omega u^\epsilon(0,x) \varphi(0,x) dx + 0. 
 \end{align*}
 For the nonlinear term, we integrate by parts
\begin{align} \label{eq: pass}
\iint_Q f_\epsilon(u^\tau) \partial_x I(u^\tau) \partial_x \varphi = -\iint_Q f_\epsilon(u^\tau) I(u^\tau) \partial^2_{xx} \varphi - \iint_Q n (u^\tau)^{n-1} \partial_x u^\tau I(u^\tau)  \partial_x \varphi .
\end{align}
We have
\begin{eqnarray*}
u^\tau \rightarrow u^\epsilon \text{ in } L^2(0,T;H_N^s(\Omega)) \text{ for all } s < 1 + \alpha.
\end{eqnarray*}
So
\begin{eqnarray*}
I(u^\tau) \rightarrow I(u^\epsilon) \text{ in } L^2(0,T;H^{s'}(\Omega)) \text{ for all } s' < 1
\end{eqnarray*}
and
\begin{eqnarray*}
\partial_x u^\tau \rightarrow \partial_x u^\epsilon \text{ in } L^2(0,T;H^{s''}(\Omega)) \text{ for all } s'' < \alpha.
\end{eqnarray*}
So we deduce the following convergences
\begin{align*}
& I(u^\tau) \rightarrow I(u^\epsilon) \text{ in } L^2(0,T;L^q(\Omega)) \text{ for all } q < \infty. \\
 & u^\tau_x \rightarrow u^\epsilon_x \text{ in } L^2(0,T;L^p(\Omega)) \text{ for all } p < \frac{2}{1-\alpha}.
 \end{align*}
 Furthermore, since $u^\tau \rightarrow u^\epsilon$ in $C^0(0,T;L^p(\Omega))$ for all $p < \frac{2}{1-\alpha}$ and $f_\epsilon$ is lipschitz then 
 \begin{eqnarray*}
  f_\epsilon(u^\tau) \rightarrow f_\epsilon(u^\epsilon) \text{ in } C^0(0,T;L^p(\Omega)) \text{ for all } p < \frac{2}{1-\alpha}.
 \end{eqnarray*}
 For the term $(u^\tau)^{n-1}$, if $n\geqslant 2$ then the function $s \rightarrow s^{n-1}$ is lipschitz and 
 \begin{eqnarray*}
 (u^\tau)^{n-1} \rightarrow (u^\epsilon)^{n-1} \text{ in } C^0(0,T;L^p(\Omega)) \text{ for all } p < \frac{2}{1-\alpha}.
\end{eqnarray*}
If $n < 2$ then $\frac{p}{n-1} \geqslant 1$ and
\begin{eqnarray*}
(u^\tau)^{n-1} \rightarrow (u^\epsilon)^{n-1} \text{ in } C^0(0,T;L^{\frac{p}{n-1}}(\Omega)) \text{ for all } p < \frac{2}{1-\alpha}.
\end{eqnarray*}
 Thus we can pass to the limit in \eqref{eq: pass} and reverse the integration by parts to obtain
 \begin{align*}
 \iint_Q f_\epsilon(u^\tau) \partial_x I(u^\tau) \partial_x \varphi \rightarrow & - \iint_Q f_\epsilon(u^\epsilon) I(u^\epsilon) \partial^2_{xx} \varphi - \iint_Q n (u^\epsilon)^{n-1} \partial_x u^\epsilon I(u^\epsilon) \partial_x \varphi \\ & =  \iint_Q f_\epsilon(u^\epsilon) \partial_x I(u^\epsilon) \partial_x \varphi.
  \end{align*}
For the properties of $ u^\epsilon$, first since $ u^\tau \rightarrow u^\epsilon $ in $ L^\infty(0,T; L^1(\Omega)) $ mass conservation equation~\eqref{eq:regmass} follows from \eqref{eq:mass u}.

Secondly, we note that $ (u^\tau) $ is bounded in $ L^\infty(0,T; H^\frac{\alpha}{2}(\Omega)) $ so $ (u^\tau) $ weakly converges to $ u^\epsilon $ in $ H^\frac{\alpha}{2}(\Omega) $ and
\begin{eqnarray*}
\| u^\epsilon \|_{\overset{.}{H}^\frac{\alpha}{2}(\Omega)} \leq \liminf_{\tau \rightarrow 0} \| u^\tau \|_{\overset{.}{H}^\frac{\alpha}{2}(\Omega)}.
\end{eqnarray*}
 Note that estimate \eqref{eq:ineq u} implies that $ \sqrt{f_\epsilon(u^\tau)} \partial_x I(u^\tau) $ is bounded in $ L^2(0,T;L^2(\Omega))$ thus it weakly converges in $ L^2(0,T;L^2(\Omega))$ and the lower semicontinuity permits us to conclude \eqref{eq:regine}.

Finally, to derive \eqref{eq:regentro} we note that $ G_\epsilon(u^\tau) \rightarrow G_\epsilon(u^\epsilon) $ almost everywhere and Fatou's lemma implies for almost every $ t \in (0,T) $
\begin{eqnarray*}
\int_\Omega G_\epsilon(u^\epsilon(t,x)) dx \leq \liminf_{\tau \rightarrow 0} \int_\Omega G_\epsilon(u^\tau(t,x)) dx.
\end{eqnarray*} 
Furthermore, $ (u^\tau)$ is relatively compact in $ L^2(0,T;H^{\frac{\alpha}{2} +1}_N(\Omega)) $ thus
\begin{eqnarray*}
\int_0^t \| u^\epsilon(s) \|^2_{\overset{.}{H}_N^{\frac{\alpha}{2} + 1}} ds = \lim_{\tau \rightarrow 0} \int_0^t \| u^\tau(s) \|^2_{\overset{.}{H}_N^{\frac{\alpha}{2} + 1}} ds.
\end{eqnarray*} 
Hence \eqref{eq: entro u} implies \eqref{eq:regentro}. 

\paragraph{Case $ 1 < \alpha < 2.$}

Note that 
\begin{eqnarray*}
\frac{u^\tau - S_\tau u^\tau}{\tau} = \partial_x (f_\epsilon(u^\tau) \partial_x I(u^\tau)).
\end{eqnarray*}
We have $ (u^\tau)$ is bounded in $ L^\infty(0,T; H^\frac{\alpha}{2}(\Omega)) $ so by the Sobolev embedding theorem, we deduce that $ (u^\tau) $ is bounded in $ L^\infty(0,T; C^{0,\frac{\alpha-1}{2}}(\Omega)) $. Thus $ (f_\epsilon(u^\tau)) $ is bounded in $ L^\infty(0,T; L^\infty(\Omega)) $. We know that $ (\partial_x I(u^\tau))$ is bounded in $ L^2(0,T; L^2(\Omega)) $ so $ (f_\epsilon(u^\tau) \partial_x I(u^\tau)) $ is bounded in $ L^2(\tau,T; L^2(\Omega)) $. We deduce that
\begin{eqnarray*}
\partial_x (f_\epsilon(u^\tau) \partial_x I(u^\tau)) \text{ is bounded in } L^2(\tau,T; W^{-1,2}(\Omega)).
\end{eqnarray*}
Since $ \alpha > 1 $ we have the following embedding
 \begin{eqnarray*}
  H^\frac{\alpha}{2}(\Omega) \hookrightarrow C^{0,\frac{\alpha - 1}{2}}(\Omega) \rightarrow W^{-1,2}(\Omega).
 \end{eqnarray*}
 Aubin's lemma implies that the sequence $ (u^\tau)$ is relatively compact in $ C^0(0,T;C^{0,\frac{\alpha-1}{2}}(\Omega)) $. Since $ (\partial_x I(u^\tau)) $ is bounded in $ L^2(\Omega) $ and $ (u^\tau)$ is bounded in $ L^\infty(0,T;L^1(\Omega))$ then, $ (u^\tau)$ is bounded in $ L^2(0,T;H_N^{\alpha + 1}(\Omega))$. Using the following embedding
\begin{eqnarray*}
H^{\alpha+1}_N(\Omega) \hookrightarrow H^{\frac{\alpha}{2}+1}_N(\Omega) \rightarrow W^{-1,2}(\Omega),
\end{eqnarray*}
we deduce that $ (u^\tau)$ is relatively compact in $ L^2(0,T;H^{\frac{\alpha}{2}+1}_N(\Omega))$. So for a subsequence we have 
 \begin{align*}
& \bullet \quad u^\tau \rightarrow u^\epsilon  \text{ locally uniformly, } \\
&  \bullet \quad  \partial_x I(u^\tau) \rightharpoonup \partial_x I(u^\epsilon) \text{ in } L^2(Q)  \text{-weakly,} \\
 & \bullet \quad u^\tau \rightarrow u^\epsilon \text{ in } L^2(0,T;H^{\frac{\alpha}{2}+1}_N(\Omega))   \text{ strongly.}
 \end{align*}
 Let us pass to the limit in \eqref{eq:equ}. As in the first case
 \begin{align*}
  \iint_{Q_{\tau,T}}{\frac{u^\tau - S_\tau u^\tau}{\tau} \varphi dx dt}  \underset{\tau\rightarrow 0}{\longrightarrow} - \iint_Q u^\epsilon \partial_t \varphi dx dt - \int_\Omega u^\epsilon(0,x) \varphi(0,x) dx .
 \end{align*}
For the nonlinear term, since
\begin{eqnarray*}
u^\tau \rightarrow u^\epsilon  \text{ locally uniformly,}
\end{eqnarray*} 
Then
\begin{eqnarray*}
f_\epsilon(u^\tau) \partial_x \varphi \rightarrow f_\epsilon(u^\epsilon) \partial_x \varphi \text{ in } L^2(0,T;L^2(\Omega))- \text{strongly.}
\end{eqnarray*}
Furthermore
\begin{eqnarray*}
\partial_x I(u^\tau) \rightharpoonup \partial_x I(u^\epsilon)  \text{ in } L^2(0,T;L^2(\Omega))- \text{weakly.}
\end{eqnarray*}
Hence
\begin{eqnarray*}
\iint_{Q}{f_\varepsilon(u^\tau)\partial_x I(u^\tau) \partial_x \varphi dx dt}\rightarrow \iint_Q{f_\epsilon(u^\epsilon)\partial_x I(u^\epsilon) \partial_x \varphi dx dt}
\end{eqnarray*}
and the proof is complete.

For the properties of $ u^\epsilon$, the proofs of estimates \eqref{eq:regmass}, \eqref{eq:regentro} and \eqref{eq:regine} are the same as in the first case.
\end{proof}

\section{Proofs of main results}

\subsection{Proof of Theorem \ref{th:th1}}

Consider the sequence $ (u^\epsilon)$ such that $ u^\epsilon \in L^\infty(0,T; H^\frac{\alpha}{2}(\Omega)) \cap L^2(0,T; H^{\frac{\alpha}{2}+ 1}(\Omega)) $ solution of \eqref{pro:reg}. Our goal is to pass to the limit $ \epsilon \rightarrow 0 $. 

Note that \eqref{eq:regine} and \eqref{eq:regmass} imply that $ (u^\epsilon)$ is bounded in $ L^\infty(0,T; H^\frac{\alpha}{2}(\Omega)) $. Since $f_\epsilon$ is Lipschitz then $f_\epsilon(u^\epsilon)$ is bounded in $ L^\infty(0,T; H^\frac{\alpha}{2}(\Omega)) $. So by using the Sobolev embedding theorem, we deduce that $ f_\epsilon(u^\epsilon)$ is bounded in $ L^\infty(0,T; L^p(\Omega))$ for all $ p < \frac{2}{1-\alpha} $.

Furthermore, \eqref{eq:regine} also implies that $ f_\epsilon(u^\epsilon)^\frac{1}{2} \partial_x I(u^\epsilon) $ is bounded in $ L^2(0,T; L^2(\Omega)) $. Thus
\begin{eqnarray*}
f_\epsilon(u^\epsilon) \partial_x I(u^\epsilon) \text{is bounded in } L^2(0,T; L^r(\Omega)) \text{ where } \frac{1}{r} = \frac{1}{2} + \frac{1}{2p}.
\end{eqnarray*}
Hence
\begin{eqnarray*}
\partial_t u^\epsilon = - \partial_x (f_\epsilon(u^\epsilon) \partial_x I(u^\epsilon)) \text{ is bounded in } L^2(0,T;W^{-1,r}(\Omega)).
\end{eqnarray*} 
Since 
\begin{eqnarray*}
 H^\frac{\alpha}{2}(\Omega) \hookrightarrow L^\frac{2}{1-\alpha}(\Omega) \rightarrow W^{-1,l}(\Omega),
\end{eqnarray*}
Aubin's lemma implies that $ (u^\epsilon)$ is relatively compact in $ C^0(0,T; L^p(\Omega)) $ for all $ p < \frac{2}{1-\alpha} $. So we can extract a subsequence such that \\
$ \bullet \quad u^\epsilon \rightarrow u  $ in $ C^0(0,T; L^p(\Omega)) $ for all $ p < \frac{2}{1-\alpha}. $ \\
$ \bullet \quad u^\epsilon \rightarrow u \in L^\infty(0,T; H^\frac{\alpha}{2}(\Omega)) $ almost everywhere in Q. 

Let us pass to the limit in \eqref{eq:regeq}. Let $ \varphi \in D( [0,T) \times \bar{\Omega}) $ satisfying $ \partial_x \varphi = 0 $ on $(0,T) \times \partial\Omega $. Since $ u^\epsilon \rightarrow u $ in $ C^0(0,T; L^1(\Omega)) $, we have
\begin{eqnarray*}
\iint_Q u^\epsilon \partial_t \varphi dx dt \rightarrow \iint_Q u \partial_t \varphi dx dt.
\end{eqnarray*}
Remark that \eqref{eq:regine} implies that 
\begin{eqnarray*}
\epsilon \iint{(\partial_x I(u^\epsilon))^2} \leq c.
\end{eqnarray*}
The Cauchy-Schwarz inequality implies
\begin{eqnarray*}
\epsilon \iint \partial_x I(u^\epsilon) \partial_x \varphi dx dt \leqslant c(\varphi) \sqrt{\epsilon} (\sqrt{\epsilon} \|\partial_x I(u^\epsilon)\|_2) \rightarrow 0.
\end{eqnarray*}
Estimate \eqref{eq:regine} also gives  that $ (u^\epsilon)_+^\frac{n}{2} \partial_x I(u^\epsilon) $ is bounded in $ L^2(0,T; L^2(\Omega)) $. For the term $(u^\epsilon)^\frac{n}{2}$, we consider two cases, if $n \geqslant 2$ then the function $s \rightarrow s^\frac{n}{2}$ is Lipschitz and $((u^\epsilon)^\frac{n}{2})$ is bounded in $ L^\infty(0,T; L^p(\Omega)) $ for all $ p < \frac{2}{1-\alpha} $. We deduce that $ ((u^\epsilon)_+^n \partial_x I(u^\epsilon)) $ is bounded in $ L^2(0,T; L^m(\Omega)) $ where $\frac{1}{m} = \frac{1}{2} + \frac{1}{p}$. If $n<2$ then $ ((u^\epsilon)^\frac{n}{2}) $ is bounded in $ L^\infty(0,T; L^\frac{2p}{n}(\Omega)) $ for all $ p < \frac{2}{1-\alpha} $ (in this case $\frac{2p}{n} \geqslant 1$). We deduce that $ ((u^\epsilon)_+^n \partial_x I(u^\epsilon)) $ is bounded in $ L^2(0,T; L^m(\Omega)) $ where $\frac{1}{m} = \frac{1}{2} + \frac{n}{2p}$, hence
\begin{eqnarray*}
h^\epsilon := (u^\epsilon)_+^n \partial_x I(u^\epsilon) \rightharpoonup h \text{ in } L^2(0,T;L^m(\Omega))\text{ weakly.}
\end{eqnarray*}
Passing to the limit we obtain
\begin{eqnarray*}
\iint_Q {u \partial_t \varphi dx dt} + \iint_Q {h \partial_x \varphi dx dt} = - \int_\Omega{u_0 \varphi(0,x) dx}.
\end{eqnarray*}
It remains to show that 
\begin{eqnarray*}
h = u_+^n \partial_x I(u)
\end{eqnarray*}
in the following sense
\begin{eqnarray} \label{eq:h}
\iint_Q h \varphi dx dt = -\iint_Q n u_+^{n-1} \partial_x u I(u) \varphi dx dt -\iint_Q u_+^n I(u) \partial_x \varphi dx dt
\end{eqnarray}
for all test functions $ \varphi $ such that $ \varphi = 0 $ on $ (0,T)\times\partial\Omega $, that is
\begin{eqnarray*}
h = \partial_x(u_+^n I(u)) - n u_+^{n-1} \partial_x u I(u)  \text{ in } D'(\Omega).
\end{eqnarray*}
Note that $ G_\epsilon $ is decreasing with respect to $\epsilon$, so
\begin{eqnarray*}
\int_\Omega G_\epsilon(u_0) dx \leq \int_\Omega G(u_0) dx \leq c.
\end{eqnarray*}
Thus estimate \eqref{eq:regentro} implies that $ (u^\epsilon)$ is bounded in $ L^2(0,T; H_N^{\frac{\alpha}{2}+1}(\Omega)) $. Recall that $ ( \partial_t u^\epsilon)$ is bounded in $ L^2(0,T; W^{-1,l}(\Omega)) $.
Aubin's lemma implies that 
\begin{eqnarray*}
(u^\epsilon)\text{ is relatively compact in } L^2(0,T; H_N^s(\Omega)) \text{ for all } s < \frac{\alpha}{2} + 1.
\end{eqnarray*}
Hence
\begin{eqnarray*}
( \partial_x u^\epsilon)\text{ is relatively compact in } L^2(0,T; H^{s'}(\Omega)) \text{ for all } s' < \frac{\alpha}{2}
\end{eqnarray*}
and 
\begin{eqnarray*}
(I(u^\epsilon))\text{ is relatively compact in } L^2(0,T; H_N^{s''}(\Omega)) \text{ for all } s'' < 1 - \frac{\alpha}{2}.
\end{eqnarray*}
Thus we can extract a subsequence such that 
\begin{align*}
& u^\epsilon \rightarrow u \text{ in } C^0(0,T; L^p(\Omega)) \text{ for all } p < \frac{2}{1-\alpha}, \\
& I(u^\epsilon) \rightarrow I(u) \text{ in } L^2(0,T; L^q(\Omega)) \text{ for all } q < \infty, \\
& \partial_x u^\epsilon \rightarrow \partial_x u \text{ in } L^2(0,T; L^p(\Omega)) \text{ for all } p < \frac{2}{1-\alpha}.
\end{align*}
We write
\begin{align*}
\iint_Q{h_\epsilon \varphi} dx dt & =\iint{(u^\epsilon)_+^n \partial_x I(u^\epsilon) \varphi } dx dt \\
& = - \iint{n (u^\epsilon)_+^{n-1} \partial_x u^\epsilon I(u^\epsilon) \varphi} dx dt - \iint{(u^\epsilon)_+^n I(u^\epsilon) \partial_x \varphi}  dx dt .
\end{align*}
Using these convergences and the fact that $ I(u^\epsilon) $ converges in $ L^2(0,T; L^q(\Omega)) $ for all $ q < \infty $ we can pass to the limit and obtain \eqref{eq:h}.
Note that for the terms $(u^\epsilon)^n$ and $(u^\epsilon)^{n-1}$ we consider two cases $n\geqslant2$ and $n<2$ and we proceed as above. In the first case the functions $s \rightarrow s^n$ and $s \rightarrow s^{n-1}$ are Lipschitz and then $(u^\epsilon)^n \rightarrow u^n$ and $(u^\epsilon)^{n-1} \rightarrow u^{n-1}$ in $C^0(0,T; L^p(\Omega))$ for all $p < \frac{2}{1-\alpha}$. If $n<2$ then $\frac{p}{n} \geqslant 1$ and $\frac{p}{n-1} \geqslant 1$ and $(u^\epsilon)^n \rightarrow u^n$ in $C^0(0,T; L^\frac{p}{n}(\Omega))$ and $(u^\epsilon)^{n-1} \rightarrow u^{n-1}$ in $C^0(0,T; L^\frac{p}{n-1}(\Omega))$ for all $p < \frac{2}{1-\alpha}$.

For the properties of $ u $, passing to the limit in \eqref{eq:regmass} implies mass conservation equation \eqref{masscons}.

Since $ (u^\epsilon)$ is bounded in $ L^2(0,T; H_N^{\frac{\alpha}{2}+1}(\Omega)) $ then $ u^\epsilon \rightharpoonup u $ and
\begin{eqnarray*}
\| u \|_{L^2(0,T;H_N^{\frac{\alpha}{2}+1}(\Omega))} \leq \liminf_{\epsilon \rightarrow 0} \| u^\epsilon \|_{L^2(0,T;H_N^{\frac{\alpha}{2}+1}(\Omega))}.
\end{eqnarray*}
Note that
\begin{eqnarray*}
G_\epsilon(u^\epsilon) \rightarrow G(u) \text{ almost everywhere and }  G_\epsilon(u_0) \leq G (u_0).
\end{eqnarray*}
Then by Fatou's lemma estimate \eqref{eq:entropy} follows from \eqref{eq:regentro}.

Remark that estimate \eqref{eq:regine} implies that $ g_\epsilon = (u^\epsilon)_+^\frac{n}{2} \partial_x I(u^\epsilon) $ weakly converges in $ L^2(Q) $ to a function $ g $ and the lower semi-continuity of the norm implies \eqref{eq:ineq}. It remains to prove that 
\begin{eqnarray} \label{eq: g}
g = \partial_x (u_+^\frac{n}{2} I(u)) - \frac{n}{2} u_+^{\frac{n}{2}-1} \partial_x u I(u) \text{ in } D'(\Omega).
\end{eqnarray}
We have
\begin{align*}
\iint_Q{g_\epsilon \varphi} dx dt & =\iint{(u^\epsilon)_+^\frac{n}{2} \partial_x I(u^\epsilon) \varphi } dx dt \\
& = - \iint{\frac{n}{2} (u^\epsilon)_+^{\frac{n}{2}-1} \partial_x u^\epsilon I(u^\epsilon) \varphi} dx dt - \iint{(u^\epsilon)_+^\frac{n}{2} I(u^\epsilon) \partial_x \varphi} dx dt.
\end{align*}
Also, using the convergences above and the fact that  $ I(u^\epsilon) \rightarrow I(u) $ in $ L^2(0,T;L^q(\Omega)) $ for all $ q < \infty $, we can pass to the limit and obtain \eqref{eq: g}.
Note also that for the terms $(u^\epsilon)^{\frac{n}{2}-1}$ and $(u^\epsilon)_+^\frac{n}{2}$ we consider two cases $n \geqslant 4$ and $n < 4$ and we proceed as above.

It remains to prove that $ u $ is a nonnegative function. Note that estimate \eqref{eq:regentro} implies that for all $ t \in (0,T) $ 
\begin{eqnarray*}
\int_\Omega G_\epsilon(u^\epsilon(t,x)) dx \leq \int_\Omega G_\epsilon(u_0(t,x)) dx.
\end{eqnarray*}
Since 
\begin{eqnarray*}
\int_\Omega G_\epsilon(u_0(t,x)) dx \leq \int_\Omega G(u_0(t,x)) dx < \infty,
\end{eqnarray*}
we conclude that 
\begin{eqnarray} \label{eq: contra}
\limsup_{\epsilon \rightarrow 0} \int_\Omega G_\epsilon(u^\epsilon(t,x)) dx < \infty.
\end{eqnarray}
Note that for all $ \delta > 0, $
\begin{eqnarray*}
\lim_{\epsilon \rightarrow 0} G_\epsilon(-\delta) = +\infty.
\end{eqnarray*}
Recall that $ u^\epsilon(t,.) $ converges almost everywhere. So for $ \eta > 0 $, Egorov's theorem implies the existence of a set $ A_\eta \subset \Omega $ such that 
\begin{eqnarray*}
\mid \Omega\setminus A_\eta \mid \leq \eta \text{ and } u^\epsilon(t,.) \rightarrow u(t,.) \text{ uniformly in } A_\eta.
\end{eqnarray*}
Let $ \delta > 0 $. We consider
\begin{eqnarray*}
C_{\eta,\delta} = A_\eta \cap \{u(t,.) \leq -2\delta\}.
\end{eqnarray*}
For every $ \eta , \delta > 0 $, there exists $ \epsilon_0(\eta,\delta) $ such that if $ \epsilon \leq \epsilon_0(\eta,\delta) $ then $ u^\epsilon(t,.) \leq -\delta $ in $ C_{\eta,\delta} $.

This implies that $ C_{\eta,\delta} $ has measure zero. Indeed, if not then for $ \epsilon \leq \epsilon_0(\eta,\delta) $ we have
\begin{eqnarray*}
G_\epsilon(u^\epsilon(t,x)) \geq G_\epsilon(-\delta) \rightarrow +\infty.
\end{eqnarray*}
By Fatou's lemma
\begin{eqnarray*}
\liminf_{\epsilon \rightarrow 0}\int_{C_{\eta,\delta}}{G_\epsilon(u^\epsilon(t,x)) dx} \geqslant \int_{C_{\eta,\delta}} \liminf_{\epsilon \rightarrow 0} G_\epsilon(u^\epsilon(t,x)) dx = +\infty
\end{eqnarray*}
which contradicts \eqref{eq: contra}.

Hence for all $ \delta > 0 $ and all $ \eta > 0 $, we have
\begin{eqnarray*}
\mid \{u(t,.)\leq -2\delta \}\mid \leq \mid C_{\eta,\delta} \mid + \mid \Omega \setminus A_\eta \mid \leq \eta.
\end{eqnarray*}

Thus, $ \mid \{u(t,.)\leq -2\delta \}\mid = 0 $ for all $ \delta > 0 $. We conclude that
\begin{eqnarray*}
\left\{u(t,.) < 0 \right\} = \underset{k \geq 1}{\bigcup} \left\{ u(t,.) \leq \frac{-1}{k} \right\}
\end{eqnarray*}
has measure zero and so $ u(t,x) \geq 0 $ for almost every $ x \in \Omega $ and for all $ t > 0 $. 
\subsection{Proof of Theorem \ref{th:th2}}

We organize this proof in two stages. In the first stage we consider nonnegative $u_0 \in H^\frac{\alpha}{2}(\Omega)$ satisfying \eqref{entropy condition} and we prove the existence of solutions as in Theorem \ref{th:th1}. In the second stage we use this information to prove the existence of solutions for nonnegative initial data which belongs to $H^\frac{\alpha}{2}(\Omega).$

\paragraph*{First stage} Consider the sequence $ (u^\epsilon)$ such that $ u^\epsilon \in L^\infty(0,T; H^\frac{\alpha}{2}(\Omega)) \cap L^2(0,T; H_N^{\frac{\alpha}{2}+ 1}(\Omega)) $ solution of \eqref{pro:reg}. Our goal is to pass to the limit $ \epsilon \rightarrow 0 $.  Note that \eqref{eq:regine} implies that $ (u^\epsilon)$ is bounded in $ L^\infty(0,T; H^\frac{\alpha}{2}(\Omega))$. So by using the Sobolev embedding theorem, we deduce that $ (u^\epsilon)$ is bounded in $ L^\infty(0,T; C^{0,\frac{\alpha-1}{2}}(\Omega))$. Hence $ (f_\epsilon(u^\epsilon))$ is bounded in $ L^\infty(0,T; L^\infty(\Omega))$. Furthermore \eqref{eq:regine} gives that $ (f_\epsilon(u^\epsilon)^\frac{1}{2} \partial_x I(u^\epsilon))$ is bounded in $ L^2(0,T; L^2(\Omega)) $. We deduce that
\begin{eqnarray*}
(f_\epsilon(u^\epsilon) \partial_x I(u^\epsilon)) \text{ is bounded in } L^2(0,T; L^2(\Omega)).
\end{eqnarray*}
So
\begin{eqnarray*}
\partial_t u^\epsilon = - \partial_x (f_\epsilon(u^\epsilon) \partial_x I(u^\epsilon)) \text{ is bounded in } L^2(0,T; W^{-1,2}(\Omega)).
\end{eqnarray*}
Since
\begin{align*}
H^\frac{\alpha}{2}(\Omega) \hookrightarrow C^{0,\frac{\alpha - 1}{2}}(\Omega) \rightarrow W^{-1,2}(\Omega),
\end{align*} 
Aubin's lemma implies that $ (u^\epsilon)$ is relatively compact in $ C^0(0,T;C^{0,\frac{\alpha-1}{2}}(\Omega)) $. So we can extract a subsequence such that
\begin{eqnarray*}
u^\epsilon \rightarrow u  \text{ locally uniformly. }
\end{eqnarray*}
Now let us pass to the limit in \eqref{eq:regeq}. Proceeding as in the case $0<\alpha\leqslant1$ we get the same results but it remains to prove the equation on $h$ i.e. \eqref{eq:h}. Since
\begin{eqnarray*}
\int_\Omega G_\epsilon(u_0) dx \leq \int_\Omega G(u_0) dx \leq c,
\end{eqnarray*}
estimate \eqref{eq:regentro} implies that $ (u^\epsilon)$ is bounded in $ L^2(0,T; H_N^{\frac{\alpha}{2}+1}(\Omega)) $. Recall that $ ( \partial_t u^\epsilon)$ is bounded in $ L^2(0,T; W^{-1,2}(\Omega)) $. Aubin's lemma implies that 
\begin{eqnarray*}
(u^\epsilon) \text{ is relatively compact in } L^2(0,T; H^s(\Omega)) \text{ for all } s < \frac{\alpha}{2} + 1.
\end{eqnarray*}
Hence
\begin{eqnarray*}
( \partial_x u^\epsilon) \text{ is relatively compact in } L^2(0,T; H^{s'}(\Omega)) \text{ for all } s' < \frac{\alpha}{2}
\end{eqnarray*}
and since $ \alpha < 2 $ then $ (u^\epsilon)$  is relatively compact in $ L^2(0,T; H^\alpha(\Omega)) $ and
\begin{eqnarray*}
(I(u^\epsilon)) \text{ is relatively compact in } L^2(0,T; L^2(\Omega)).
\end{eqnarray*}
Thus we can extract a subsequence such that 
\begin{align*}
& u^\epsilon \rightarrow u \text{ locally uniformly, } \\
& I(u^\epsilon) \rightarrow I(u) \text{ in } L^2(0,T; L^2(\Omega)), \\
& \partial_x u^\epsilon \rightarrow \partial_x u \text{ in } L^2(0,T; \text{locally uniformly with respect to x}).
\end{align*}
Using an integration by parts for the equation of $h_\epsilon$ and using these convergences we can pass to the limit and obtain \eqref{eq:h}.

For the properties of $ u $, the proofs are the same as in the case $0<\alpha\leqslant1$  but we use these convergences above to obtain the equation on $ g $. 

We prove also that $ u $ is a nonnegative function as in the case $0<\alpha\leqslant1$.

\paragraph*{Second stage} Now we consider the case where $u_0 \geqslant 0 $ belongs to $ H^\frac{\alpha}{2}(\Omega)$ without the additional condition \eqref{entropy condition}. If we define 
\begin{eqnarray*}
u_{0 \delta}(x) = u_0(x) + \delta 
\end{eqnarray*}
and denote $u_\delta$ the nonnegative solution $u$ constructed in the first stage for the initial data $u_{0\delta}$, which satisfies \eqref{entropy condition}, then $u_\delta$ satisfies
\begin{eqnarray} \label{inequalities}
\mid u_\delta \mid \leq A, \quad \iint_Q u_\delta^n \partial_x I(u_\delta)^2 dx dt \leq C, \quad \mid u_\delta(t,x_2) - u_\delta(t,x_1) \mid \leq k \mid x_2 - x_1 \mid^\frac{\alpha -1}{2},
\end{eqnarray}
with constants $C, A, K$ independent of $\delta$ and $T$.

\begin{proposition} \label{prop:holder}
There exists a constant $M$ independent of $\delta$ and $T$ such that
\begin{eqnarray} \label{M}
\mid u_\delta(t_2,x) - u_\delta(t_1,x) \mid \leqslant M \mid t_2 - t_1 \mid^\frac{\alpha -1}{2(\alpha +2)}
\end{eqnarray}
for all $x \in \Omega$, $t_1$ and $t_2 \in (0,T)$.
\end{proposition}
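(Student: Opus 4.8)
The plan is to test the weak formulation \eqref{limitP} against a function $\varphi(t,x)=\theta(t)\psi(x)$ that concentrates in space near a fixed point $x_0$ at a small scale $\rho$, and then to optimize over $\rho$. The scale $\rho$ is the free parameter through which the $(\alpha+2)$-parabolic scaling $|t|\sim\rho^{\alpha+2}$ enters; the exponent $\frac{\alpha-1}{2(\alpha+2)}$ is precisely the spatial Hölder exponent $\frac{\alpha-1}{2}$ from \eqref{inequalities} transported through this scaling. I would first note that for $t_2-t_1$ large (so that the scale $\rho=(t_2-t_1)^{1/(\alpha+2)}$ cannot fit inside $\Omega$) the claim is trivial: there $(t_2-t_1)^{\frac{\alpha-1}{2(\alpha+2)}}\gtrsim 1$, so $|u_\delta(t_2,x)-u_\delta(t_1,x)|\le 2A$ suffices. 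It then remains to treat small $t_2-t_1$.

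Fix $x_0\in\Omega$ and $t_1<t_2$, and let $\psi=\psi_{x_0,\rho}\ge 0$ be a smooth bump supported in an interval of width $O(\rho)$ about $x_0$, normalized by $\int_\Omega\psi\,dx=1$ and chosen so that $\partial_x\psi=0$ on $\partial\Omega$ (a one-sided profile is used if $x_0$ lies within $\rho$ of the boundary, which is compatible with the even/periodic reflection used earlier for $I$). Such a bump has height $O(\rho^{-1})$, so $\int_\Omega(\partial_x\psi)^2\,dx\le c\rho^{-3}$. Taking $\varphi=\theta(t)\psi(x)$ with $\theta$ smooth, supported in $(t_1,t_2)$ and approximating $\mathbf 1_{[t_1,t_2]}$, the initial term in \eqref{limitP} drops out, and passing $\theta\to\mathbf 1_{[t_1,t_2]}$ yields the flux identity
\[
\int_\Omega\big(u_\delta(t_2,x)-u_\delta(t_1,x)\big)\psi(x)\,dx=\int_{t_1}^{t_2}\int_\Omega u_\delta^{\,n}\,\partial_x I(u_\delta)\,\partial_x\psi\,dx\,dt .
\]

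Next I estimate both sides. For the right-hand side, factoring $u_\delta^{\,n}\partial_x I(u_\delta)=\sqrt{u_\delta^{\,n}}\,(\sqrt{u_\delta^{\,n}}\,\partial_x I(u_\delta))$ and applying Cauchy--Schwarz, the energy bound $\iint_{Q_+}u_\delta^{\,n}(\partial_x I(u_\delta))^2\le C$ from \eqref{inequalities} controls one factor, while $|u_\delta|\le A$ and $\int(\partial_x\psi)^2\le c\rho^{-3}$ give $\iint u_\delta^{\,n}(\partial_x\psi)^2\le cA^n(t_2-t_1)\rho^{-3}$, so the right-hand side is at most $c\sqrt{CA^n}\,(t_2-t_1)^{1/2}\rho^{-3/2}$. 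For the left-hand side I use $\int\psi=1$ and the spatial Hölder estimate of \eqref{inequalities} to replace the weighted average by the pointwise value at $x_0$, at the cost of an error $\le 2K\rho^{\frac{\alpha-1}{2}}$, obtaining
\[
|u_\delta(t_2,x_0)-u_\delta(t_1,x_0)|\le c\sqrt{CA^n}\,(t_2-t_1)^{1/2}\rho^{-3/2}+2K\rho^{\frac{\alpha-1}{2}} .
\]
Balancing the two terms with $\rho=(t_2-t_1)^{1/(\alpha+2)}$ makes both scale like $(t_2-t_1)^{\frac{\alpha-1}{2(\alpha+2)}}$, because $\frac{\alpha-1}{2}+\frac{3}{2}=\frac{\alpha+2}{2}$; this gives the claimed bound with a constant $M$ depending only on $A,C,K,\alpha,n$, hence independent of $\delta$ and $T$.

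The main obstacle I expect is the rigorous passage $\theta\to\mathbf 1_{[t_1,t_2]}$, that is, justifying that $\int_0^T\theta'(t)\big(\int_\Omega u_\delta\psi\,dx\big)dt$ converges to $\int_\Omega(u_\delta(t_1)-u_\delta(t_2))\psi\,dx$. This requires the map $t\mapsto\int_\Omega u_\delta(t,\cdot)\psi\,dx$ to have well-defined values at $t_1,t_2$, which I would secure by working with Lebesgue points in time: the inequality first holds for a.e.\ pair $(t_1,t_2)$ and then extends to all pairs after selecting the continuous representative it produces. A secondary technical point is the construction of $\psi$ near $\partial\Omega$ respecting $\partial_x\psi=0$ while keeping both the width $O(\rho)$ and the bound $\int(\partial_x\psi)^2\le c\rho^{-3}$; the symmetric reflection already exploited for the operator $I$ makes an admissible choice available.
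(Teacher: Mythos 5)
Your proposal is correct and follows essentially the same route as the paper's Appendix~A, which adapts the Bernis--Friedman argument: both test the weak flux identity for $u_\delta$ (the paper's \eqref{eq:udelta}, which is the identity you should cite rather than \eqref{limitP}) against a spatial bump times an approximate indicator of $[t_1,t_2]$, bound the flux side by Cauchy--Schwarz using the uniform $L^2(Q)$ bound on $u_\delta^n\partial_x I(u_\delta)$, and absorb the passage from the weighted average to the pointwise value via the spatial $C^{0,\frac{\alpha-1}{2}}$ bound, with the scale choice $\rho\sim(t_2-t_1)^{1/(\alpha+2)}$ matching the paper's balance $\mu=\gamma/(2\gamma+3)$, $\gamma=\frac{\alpha-1}{2}$. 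The only differences are presentational --- the paper argues by contradiction with a bump width tied to $M$ while you optimize the scale directly, and your explicit treatment of the case where the bump does not fit inside $\Omega$ actually handles a point the paper passes over with its assumption that $\{\xi=1\}\subset\Omega$.
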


\begin{proof}
The proof is given in Appendix A.
\end{proof}

Taking a subsequence 
\begin{eqnarray*}
u_\delta \rightarrow u \quad \text{ locally uniformly in } Q,
\end{eqnarray*}
we will prove Theorem \ref{th:th2}. Let $ \varphi \in \mathcal{D}([0,T)\times \overset{-}{\Omega}) $ satisfying $ \partial_x \varphi = 0 $ on $(0,T) \times \partial\Omega $. We have
\begin{eqnarray} \label{eq:udelta}
\iint_Q u_\delta \partial_t \varphi dt dx + \iint_Q u_\delta^n \partial_x I(u_\delta) \partial_x \varphi dx dt = -\int_\Omega (u_0 + \delta) \varphi(0,.) dx.
\end{eqnarray}
Since $u_\delta \rightarrow u$ locally uniformly then
\begin{eqnarray} \label{limit}
\iint_Q u_\delta \partial_t \varphi dt dx \rightarrow \iint_Q u \partial_t \varphi dt dx \quad \text{ and } \quad \int_\Omega (u_0 + \delta) \varphi(0,.) dx \rightarrow \int_\Omega u_0 \varphi(0,.) dx \quad \text{ as } \delta \rightarrow 0.
\end{eqnarray} 
It remains to pass to the limit in the nonlinear term. We consider 
\begin{eqnarray*}
h_\delta = u_\delta^n \partial_x I(u_\delta).
\end{eqnarray*}
From \eqref{inequalities}, $((u_\delta)^\frac{n}{2} \partial_x I(u_\delta))$ is bounded in $L^2(Q)$ and since $(u_\delta)$ is bounded in $L^\infty(Q)$ so $(h_\delta)$ is bounded in $L^2(Q)$ and weakly converges to $h$ in $L^2(Q)$. Our aim is to prove that
\begin{eqnarray*}
h = \begin{cases} u^n \partial_x I(u) \quad & \text{in } Q_+:= \{ u > 0\} \cap Q \\
0 \quad & \text{elsewhere}. 
\end{cases}
\end{eqnarray*}
For any $\eta > 0$ we have
\begin{eqnarray*}
c \geqslant \int_{\{u \geqslant \eta \}\cap Q} u_\delta^n \partial_x I(u_\delta)^2 \geqslant \left( \frac{\eta}{2}\right)^n \int_{\{u \geqslant \eta\}\cap Q} \partial_x I(u_\delta)^2,
\end{eqnarray*}
so $(\partial_x I(u_\delta)) $ is bounded in $L^2(\{u \geqslant \eta\}\cap Q).$ Thus for all $k \in \mathbb{N}$,
\begin{eqnarray*}
(\partial_x I(u_\delta)) \text{ weakly converges in  } L^2(Q_k)
\end{eqnarray*}
where $Q_k := \{u \geqslant \frac{1}{k}\} \cap Q $. So, up to a subsequence,
\begin{eqnarray*}
\partial_x I(u_\delta)) \text{ weakly converges to } g \text{ in } L^2_{loc}(Q_+)
\end{eqnarray*}
where $Q_+ =\underset{k\in\mathbb{N}}{\bigcup} P_k =\{u>0\} \cap Q$. This implies that
\begin{eqnarray*}
\partial_x I(u_\delta) \rightarrow g \quad \text{in }\mathcal{D}'(Q_+).
\end{eqnarray*}
It remains to prove that
\begin{eqnarray*}
g = \partial_x I(u)\quad \text{in }\mathcal{D}'(Q_+).
\end{eqnarray*}
Since $u_\delta \rightarrow u$ locally uniformly in $Q$ then by using Corollary \ref{corollary}
\begin{eqnarray*}
I(u_\delta) \rightarrow I(u) \text{ in } \mathcal{D}'(Q).
\end{eqnarray*}
So, $\partial_x I(u_\delta) \rightarrow \partial_x I(u)$ in $\mathcal{D}'(Q)$.
Now, let $\varphi \in \mathcal{D}(Q_+)$ we have
\begin{eqnarray*}
\langle \partial_x I(u_\delta), \varphi\rangle_{\mathcal{D}'(Q_+)\mathcal{D}(Q_+)} = \langle \partial_x I(u_\delta), \bar{\varphi} \rangle_{\mathcal{D}'(Q)\mathcal{D}(Q)} \underset{\delta\rightarrow 0}{\longrightarrow} \langle \partial_x I(u), \bar{\varphi} \rangle_{\mathcal{D}'(Q)\mathcal{D}(Q)}
\end{eqnarray*}
where $\bar{\varphi} $ is the extension by $0$ of $\varphi$ to $Q$. So
\begin{eqnarray} \label{conv}
g = \partial_x I(u)  \text{ in }\mathcal{D}'(Q_+) \text{ and } \partial_x I(u) \in L^2_{loc}(Q_+).
\end{eqnarray}
On the other hand, if $\delta$ is sufficiently small, then
\begin{eqnarray} \label{conv0}
\mid \iint_{\{u=0 \} \cap Q} u_\delta^n \partial_x I(u_\delta) \partial_x \varphi \mid \leqslant c \delta^\frac{n}{2} (\iint u_\delta^n \partial_x I(u_\delta)^2)^\frac{1}{2} \leqslant C \delta^\frac{n}{2}
\end{eqnarray}
Taking $\delta \rightarrow 0$ in \eqref{eq:udelta} and using \eqref{limit}, \eqref{conv} and \eqref{conv0} we deduce that \eqref{limitP} is satisfied.
Finally since $u_\delta$ satisfies mass conservation and uniformly converges to $u$ then $u$ inherits the same property.

\subsection{Proof of Theorem \ref{th:th3}}

Consider the sequence $ (u^\epsilon) $ such that $ u^\epsilon $ solution of \eqref{pro:reg} introduced in the proof of Theorem \ref{th:th1}. Recall that (28) implies that $ (u^\epsilon) $ is bounded in $ L^2(0,T; H_N^{\frac{\alpha}{2}+1}(\Omega)). $

\paragraph{Case $ 0 < \alpha < 1.$}

We recall that $ (\partial_t u^\epsilon) $ is bounded in $ L^2(0,T; W^{-1,l}(\Omega)) $ . So Aubin's lemma implies that $ (u^\epsilon)$ converges in $ L^2(0,T; C^{0,\beta}(\Omega)) $ for all $ \beta < \frac{\alpha+1}{2}$. We can thus find a subsequence, also denoted $ (u^\epsilon)$, and a set $ P \subset (0,T) $ such that $ \mid (0,T) \setminus P \mid = 0 $ and for all $ t \in P $, $ u^\epsilon(t,.) $ converges strongly in $ C^\beta(\Omega) $.\\

We note that for all $ t \in P$, $ u $ is strictly positive. Indeed if there exists $ (t_0,x_0) \in P \times \Omega  $ such that $ u(t_0,x_0) = 0 $ then for any $ \beta < \frac{\alpha+1}{2} $ there exists a constant $ c_\beta $ such that for all $ x \in \Omega $
\begin{eqnarray*}
u(t_0,x) \leq c_\beta \mid x - x_0 \mid^\beta.
\end{eqnarray*}
Thus
\begin{eqnarray*}
\int G(u(t_0,x)) dx \geq \int \frac{1}{(c_\beta \mid x - x_0 \mid^\beta)^{n-2}} dx.
\end{eqnarray*}
Given $ n > 4$, we can choose $ \beta < \frac{\alpha +1}{2} $ such that $ \beta (n-2) > 1 $. We deduce
\begin{eqnarray*}
\int G(u(x,t_0)) dx = \infty
\end{eqnarray*}
which contradicts \eqref{eq: contra}.

We deduce that there exists $ \delta > 0 $ (depending on $ t $) such that for $ \epsilon $ small enough 
\begin{eqnarray*}
u^\epsilon(t,.) \geqslant \delta \text{ in } \Omega.
\end{eqnarray*}
Note that 
\begin{eqnarray*}
\liminf_{\epsilon \rightarrow 0}\int_\Omega f_\epsilon(u^\epsilon) \mid \partial_x I(u^\epsilon) \mid^2 dx < \infty \text{ for all } t \in P.
\end{eqnarray*}
Indeed, if we denote 
\begin{eqnarray*}
A_k = \{ t \in P ; \liminf_{\epsilon \rightarrow 0}\int_\Omega f_\epsilon(u^\epsilon) \mid \partial_x I(u^\epsilon) \mid^2 dx \geqslant k \}
\end{eqnarray*}
then using \eqref{eq:regine} and Fatou's lemma we have
\begin{align*}
c & \geqslant \liminf_{\epsilon \rightarrow 0}\int_0^T\int_\Omega f_\epsilon(u^\epsilon) \mid \partial_x I(u^\epsilon) \mid^2 dx dt \\
& \geqslant \liminf_{\epsilon \rightarrow 0}\int_{A_k}\int_\Omega f_\epsilon(u^\epsilon) \mid \partial_x I(u^\epsilon) \mid^2 dx dt \\
& \geqslant \int_{A_k}\liminf_{\epsilon \rightarrow 0}\int_\Omega f_\epsilon(u^\epsilon) \mid \partial_x I(u^\epsilon) \mid^2 dx dt \\
& \geqslant k \mid A_k \mid.
\end{align*}
So $ \mid A_k \mid \leqslant \frac{c}{k} $ and the set
\begin{eqnarray*}
\left\{ t \in P ; \liminf_{\epsilon \rightarrow 0}\int_\Omega f_\epsilon(u^\epsilon) \mid \partial_x I(u^\epsilon) \mid^2 dx = \infty \right\}
\end{eqnarray*}
has measure zero. We deduce that for all $ t \in P $
\begin{eqnarray*}
\liminf_{\epsilon \rightarrow 0}\int_\Omega \mid \partial_x I(u^\epsilon) \mid^2 dx < \infty
\end{eqnarray*}
and so for all $ t \in P $
\begin{eqnarray*}
u^\epsilon(t,.) \rightharpoonup u(t,.) \text{ in } H_N^{\alpha + 1}(\Omega)- \text{weakly.}
\end{eqnarray*}
In particular, we can pass to the limit in the flux $ J_\epsilon = f_\epsilon(u^\epsilon) \partial_x I(u^\epsilon) $ and write
\begin{eqnarray*}
\lim_{\epsilon \rightarrow 0} J_\epsilon = J = f(u) \partial_x I(u) \text{ in } L^1(\Omega) \text{ and for almost } t \in (0,T).
\end{eqnarray*}
Finally, since $ u \in H_N^{\alpha +1}(\Omega) $, $ u_x(t,x) = 0 $ for $ x \in \partial\Omega $ and almost every $ t \in (0,T) $.

\paragraph{Case $ 1 \leqslant \alpha < 2.$}

We recall that $ (\partial_t u^\epsilon)$ is bounded in $ L^2(0,T; W^{-1,2}(\Omega)) $. So Aubin's lemma implies that $ (u^\epsilon) $ converges in $ L^2(0,T; C^{0,\beta}(\Omega)) $ for all $ \beta < 1 $. We can thus find a subsequence, also denoted $ (u^\epsilon) $, and a set $ P \subset (0,T) $ such that $ \mid (0,T) \setminus P \mid = 0 $ and for all $ t \in P $ , $ u^\epsilon(t,.) $ converges strongly in $ C^\beta(\Omega) $.

We note that for all $ t \in P $, $ u $ is strictly positive. Indeed if there exists $ (t_0,x_0) \in P \times \Omega  $ such that $ u(t_0,x_0) = 0 $ then for any $ \beta < 1 $ there exists a constant $ c_\beta $ such that for all $ x \in \Omega $
\begin{eqnarray*}
u(t_0,x) \leq c_\beta \mid x - x_0 \mid^\beta.
\end{eqnarray*}
Thus
\begin{eqnarray*}
\int G(u(x,t_0)) dx \geq \int \frac{1}{(c_\beta \mid x - x_0 \mid^\beta)^{n-2}} dx.
\end{eqnarray*}
Given $ n > 3,$ we can choose $ \beta < 1 $ such that $ \beta (n-2) > 1 $. We deduce
\begin{eqnarray*}
\int G(u(x,t_0)) dx = \infty
\end{eqnarray*}
which contradicts \eqref{eq: contra}.

The rest of the proof is the same as in the first case.

\appendix

\section{Proof of Proposition \ref{prop:holder}}

Our aim is to prove that if
\begin{eqnarray} \label{eq:K}
\mid u_\delta(t,x_2) - u_\delta(t,x_1) \mid \leq K \mid x_2 - x_1 \mid^\gamma
\end{eqnarray}
for all $t \in (0,T), x_1 $ and $x_2 \in \Omega$ with constant $ K$ independent of $\delta$ and $T$, then there exists a constant $M$ independent of $\delta$ and $T$ such that
\begin{eqnarray} \label{eq: M}
\mid u_\delta(t_2,x_0) - u_\delta(t_1,x_0) \mid \leqslant M \mid t_2 - t_1 \mid^\frac{\gamma}{2\gamma +3}
\end{eqnarray}
for all $t_1$ and $t_2 \in (0,T), x \in \Omega.$
This proof is an adaptation of the proof done by Bernis-Friedman in case $\gamma = \frac{1}{2}$ \cite[Lemma 2.1]{bernis} for a general $\gamma$.

We suppose that for all $M>0$ one can find $x_0 \in \Omega$ and $t_2, t_1 \in (0,T)$ such that
\begin{eqnarray}
\mid u_\delta(t_2,x_0) - u_\delta(t_1,x_0) \mid > M \mid t_2 - t_1 \mid^\frac{\gamma}{2\gamma +3}.
\end{eqnarray}
We suppose that $u_\delta(t_2,x_0) > u_\delta(t_1,x_0)$ and that $t_2 > t_1$; thus
\begin{eqnarray}
u_\delta(t_2,x_0) - u_\delta(t_1,x_0) > M (t_2 - t_1)^\mu, \quad \quad 0<t_1<t_2<T,
\end{eqnarray}
where $\mu = \frac{\gamma}{2\gamma +3}$.
We have
\begin{eqnarray}
\iint u_\delta \partial_t \varphi = - \iint h_\delta \partial_x \varphi
\end{eqnarray}
where $h_\delta = u_\delta^n \partial_x I(u_\delta)$, which is valid for any reasonable testfunction. Consider a testfunction $\varphi$ of the form
\begin{eqnarray*}
\varphi(t,x) = \xi(x) \theta_\rho(t)
\end{eqnarray*} 
where $\xi$ and $\theta_\rho$ are defined as follows.
\begin{eqnarray*}
\xi(x) = \xi_0\left(\frac{x-x_0}{\left(M/4K\right)^\frac{1}{\gamma}(t_2 - t_1)^\frac{\mu}{\gamma}}\right)
\end{eqnarray*}
where $M$ is from \eqref{eq: M} and $K$ is from \eqref{eq:K}, and $\xi_0(x) = \xi_0(-x)$, $\xi_0 \in C^\infty_0(\Omega)$, $\xi_0(x) = 1$ if $0\leqslant x < \frac{1}{2}$, $\xi_0(x) = 0$ if $x \geqslant 1$ and $\xi_0'(x) \leqslant 0$ if $x \geqslant 0$. Thus 
\begin{eqnarray*}
\xi(x) = \begin{cases} 0 \quad \quad \text{if } \mid x - x_0\mid \geqslant \left(M/4K\right)^\frac{1}{\gamma}(t_2 - t_1)^\frac{\mu}{\gamma} \\
1 \quad \quad \text{if } \mid x - x_0\mid \leqslant \frac{1}{2} \left(M/4K\right)^\frac{1}{\gamma}(t_2 - t_1)^\frac{\mu}{\gamma}.
\end{cases}
\end{eqnarray*}
We take
\begin{align*}
\theta_\rho(t) = \int_{-\infty}^t \theta_\rho'(s) ds \quad \text{where } \quad \theta_\rho'(t) = \begin{cases} \frac{1}{\rho} \quad & \text{if } \mid t - t_2 \mid < \rho \\
\frac{-1}{\rho} \quad & \text{if } \mid t - t_1 \mid < \rho \\
0 \quad & \text{elsewhere},
\end{cases}
\end{align*}
and $\rho < \frac{1}{2} (t_2 - t_1)$. 
So, we get
\begin{eqnarray*}
\iint u_\delta \xi(x) \theta_\rho'(t) = - \iint h_\delta \xi'(x) \theta_\rho(t).
\end{eqnarray*}
The left-hand side satisfies 
\begin{eqnarray*}
\iint u_\delta(t,x) \xi(x) \theta_\rho'(t) \rightarrow 4\int \xi(x) (u_\delta(t_2,x) - u_\delta(t_1,x)) dx \quad \text{ as } \rho\rightarrow 0
\end{eqnarray*}
To estimate the last expression, we shall only consider values of $x$ such that 
\begin{eqnarray*}
 \mid x - x_0 \mid \leqslant \left(M/4K\right)^\frac{1}{\gamma}(t_2 - t_1)^\frac{\mu}{\gamma}.
\end{eqnarray*}
For such values,
\begin{align*}
u_\delta(t_2,x) - u_\delta(t_1,x) & = [u_\delta(t_2,x) - u_\delta(t_2,x_0)] + [u_\delta(t_2,x_0) - u_\delta(t_1,x_0)] + [u_\delta(t_1,x_0) - u_\delta(t_1,x)] \\
& \geqslant -2K\mid x - x_0 \mid^\gamma + M (t_2 - t_1)^\mu \\
& \geqslant \frac{M}{2} (t_2 - t_1)^\mu.
\end{align*}
Hence, by assuming that the set $\{\xi = 1\}$ is included in $\Omega$ and by a change of variables in $x$,
\begin{eqnarray*}
\int \xi(x) (u_\delta(t_2,x) - u_\delta(t_1,x)) dx \geqslant \left(\int \xi_0(x) dx\right) \frac{M}{2} (t_2 - t_1)^\mu \frac{M^\frac{1}{\gamma}}{(4K)^\frac{1}{\gamma}}(t_2 - t_1)^\frac{\mu}{\gamma}.
\end{eqnarray*}
On the other hand, we have
\begin{eqnarray*}
\left| \iint h_\delta \xi'(x) \theta_\rho(t) \right| \leqslant \left( \iint h_\delta^2 \right)^\frac{1}{2} \left( \iint (\xi' \theta_\rho)^2 \right)^\frac{1}{2}.
\end{eqnarray*}
But $\xi'(x) =\left( \left(M/4K\right)^\frac{1}{\gamma}(t_2 - t_1)^\frac{\mu}{\gamma}\right)^{-1} \xi_0'\left(\frac{x-x_0}{\left(M/4K\right)^\frac{1}{\gamma}(t_2 - t_1)^\frac{\mu}{\gamma}}\right)$, so since $h_\delta$ is uniformly bounded in $L^2(Q)$ we have
\begin{eqnarray*}
\left| \iint h_\delta \xi'(x) \theta_\rho(t) \right| \leqslant \frac{C}{\frac{M^\frac{1}{\gamma}}{(4K)^\frac{1}{\gamma}}(t_2 - t_1)^\frac{\mu}{\gamma}} \left( \iint h_\delta^2 \right)^\frac{1}{2} \frac{M^\frac{1}{2 \gamma}}{(4K)^\frac{1}{2 \gamma}}(t_2 - t_1)^\frac{\mu}{2 \gamma} (t_2 - t_1 - 2\rho)^\frac{1}{2}.
\end{eqnarray*}
Thus by letting $\rho\rightarrow 0$ we conclude that
\begin{eqnarray*}
M^{1+\frac{1}{\gamma}} (t_2 - t_1)^{\mu + \frac{\mu}{\gamma}} \leqslant C M^{-\frac{1}{2\gamma}} (t_2 - t_1)^{\frac{\mu}{2 \gamma} - \frac{\mu}{\gamma} + \frac{1}{2}},
\end{eqnarray*}
where $C$ is a new constant independent of $\delta$, $T$ and $M$, thus
\begin{eqnarray*}
M \leqslant c^\frac{2\gamma}{3+2\gamma} (t_2 - t_1)^{-\mu +\frac{\gamma}{2\gamma +3}}.
\end{eqnarray*}
Since $\mu = \frac{\gamma}{2\gamma +3}$, we find that $M \leqslant C^\frac{2\gamma}{3+2\gamma}$, and the lemma follows.

\section*{Acknowledgments}

The author would like to thank C. Imbert and F. Vigneron for their expert advice and encouragement throughout working on this research paper.

\bibliographystyle{siam}
\bibliography{mabiblio}

\end{document}